\documentclass [a4paper, 12pt]{amsart}

\usepackage[margin=2.5cm]{geometry} 
\linespread{1.3} %interlinea: 1 e 1/2
\setlength{\parindent}{0pt}

\usepackage[alphabetic]{amsrefs}
\usepackage{wasysym,stmaryrd,enumerate}

\usepackage{amsthm}
\usepackage{latexsym}
\usepackage{times}
\usepackage{graphicx}
\usepackage{yhmath}

\usepackage{amsmath}
\usepackage{amsfonts}
\usepackage{amssymb}
\usepackage{amsthm}
\usepackage{graphicx}
\usepackage{pb-diagram}
\usepackage{epstopdf}
\usepackage{fancyhdr}
\usepackage[all]{xy}

\newtheorem{cor}{Corollary}[section]
\newtheorem{te}[cor]{Theorem}
\newtheorem{p}[cor]{Proposition}

\newtheorem{lemma}[cor]{Lemma}
\theoremstyle{definition}
\newtheorem{de}[cor]{Definition}
\theoremstyle{remark}

\newtheorem{nt}[cor]{Notation}

\newcommand{\cz}{\mathbb{C}}
\newcommand{\nz}{\mathbb{N}}

\newcommand{\bb}{\mathcal{B}}

\newcommand{\pp}{\mathcal{P}}

\newcommand{\vp}{\varphi}
\newcommand{\ve}{\varepsilon}
\newcommand{\es}{\emptyset}

   {\begin{verse}%
     \small }%
   {\end{verse}}

\topskip30truept \evensidemargin55pt \oddsidemargin55pt
\baselineskip18pt \vfuzz3pt \hfuzz3pt \pretolerance=1000
\setlength{\oddsidemargin}{0.3cm}
\setlength{\evensidemargin}{0.3cm} \setlength{\hoffset}{-1cm}
\setlength{\voffset}{-1cm} \setlength{\textwidth}{18cm}
\setlength{\textheight}{25cm} \setlength{\parindent}{20pt}

\DefineSimpleKey{bib}{arx}
\BibSpec{arx}{%
  +{}{\PrintAuthors} {author}
  +{,}{ \textit} {title}
  +{}{ \parenthesize} {date}
  +{,}{ } {note}
  +{.}{ } {transition}
}

\begin{document}

\title{Two special subgroups of the universal sofic group}

\author{Matteo Cavaleri}
\address[M. Cavaleri]{Institute of Mathematics of the Romanian Academy, 21 Calea Grivitei Street, 010702 Bucharest, Romania}
\email{matte.cavaleri@gmail.com}

\author{Radu B. Munteanu}
\address[R.B. Munteanu]{Institute of Mathematics of the Romanian Academy, 21 Calea Grivitei Street, 010702 Bucharest, Romania and Department of Mathematics, University of Bucharest, 14 Academiei Street}
\email{radu-bogdan.munteanu@g.unibuc.ro}

\author{Liviu P\u aunescu}
\address[L. P\u aunescu]{Institute of Mathematics of the Romanian Academy, 21 Calea Grivitei Street, 010702 Bucharest, Romania}
\email{liviu.paunescu@imar.ro}

%\author{Liviu P\u aunescu}
%\address[L. P\u aunescu]{Institute of Mathematics of the Romanian Academy, 21 Calea Grivitei Street, 010702 Bucharest, Romania}
%\email{liviu.paunescu@imar.ro}

\begin{abstract}
We define a subgroup of the universal sofic group, obtained as the normaliser of a separable abelian subalgebra. This subgroup can be obtained as an extension by the group of automorphisms on a standard probability space. We show that each sofic representation can be conjugated inside this subgroup. 
\end{abstract}
\maketitle

A well-known result due to Elek and Szabo is that a group is \emph{sofic} if and only if it is a subgroup of the \emph{universal sofic group} $\Pi_{k\to\omega}P_{n_k}$, \cite{El-Sz1}. Here $P_n\subset M_n(\cz)$ is the subgroup of permutation matrices, isomorphic to $Sym(n)$, the symmetric group. Elements of the universal sofic group act on $\Pi_{k\to\omega}D_{n_k}$, where $D_n\subset M_n(\cz)$ is the subalgebra of diagonal matrices. As an abelian type $II_1$ von Neumann algebra, $\Pi_{k\to\omega}D_{n_k}$ is isomorphic to the algebra of functions on a probability space, $L^\infty(X_\omega,\mu_\omega)$, where $(X_\omega,\mu_\omega)$ is the Loeb space.

The above picture has been fruitful in the study of soficity. It can be used to provide a proof of the fact that free product of sofic groups amalgamated over amenable subgroups is still sofic, \cite{Pa1}, Corollary 3.7. It was also successfully used to provide a compact proof for stability of the commutant in permutations with respect to the Hamming distance, \cite{Ar-Pa}. It is therefore natural to wish for a better understanding of these objects and their interaction.

We will not go through the basics of ultraproducts with respect to $\omega$, a non-principal ultrafilter on $\nz$, and $\{n_k\}_k\subset\nz$, an increasing sequence of natural numbers. The reader can consult the vast literature on the subject, including any of the previous cited articles or introductory papers as \cite{Pe} or \cite{Ca-Lu}.

Let $(X,\mu)$ be the unit interval endowed with the Lebesgue measure. In this paper we first construct a canonic embedding $L^\infty(X,\mu)\hookrightarrow L^\infty(X_\omega,\mu_\omega)$. In the second section we introduce the group $\mathcal{GA}$, the subgroup of the universal sofic group that normalises $L^\infty(X,\mu)$. In section 3, we prove that any sofic group is a subgroup of $\mathcal{GA}$. In the forth section we use the \emph{Coxeter length} to study this group, and in the last section we obtain $\mathcal{GA}$ as an extension of $Aut(X,\mu)$.

\section{The Loeb space}

We already defined the Loeb space $(X_\omega,\bb_\omega,\mu_\omega)$ by the equation $\Pi_{k\to\omega}D_{n_k}\simeq L^\infty(X_\omega,\mu_\omega)$, but we need a better understanding of its structure. For this we have to enter the realm of non-standard analysis in more depth than just considering some metric ultraproducts. The Loeb space was introduced in \cite{Lo}. We follow here methods from \cite{El-Sze}.

As a set, $X_\omega$ is the \emph{algebraic ultraproduct} of sets $\{1,2,\ldots,n_k\}$. From now on, $(x_k)_k$ or $(y_k)_k$ will denote elements in the Cartesian product $\Pi_k\{1,2,\ldots,n_k\}$. On this set we define the equivalence relation $(x_k)_k\sim(y_k)_k \Leftrightarrow \{k:x_k=y_k\}\in\omega$, i.e. two sequences are equivalent if they are equal on a subset in the ultrafilter. The algebraic ultraproduct is defined as: $X_\omega=\Pi_k\{1,2,\ldots,n_k\}/\sim$. Not to overload notations, we still denote by $(x_k)_k$ its class in $X_\omega$.

We now proceed to construct a measurable structure on $X_\omega$. Let $A_k\subset\{1,2,\ldots,n_k\}$ and construct $\Pi_{k\to\omega}A_k=\{{(x_k)_k}\in X_\omega:x_k\in A_k\}$. Let $\bb_\omega^0$ be the collection of all such subsets of $X_\omega$. Moreover, define $\mu_\omega:\bb_\omega^0\to[0,1]$ by $\mu_\omega(\Pi_{k\to\omega}A_k)=\lim_{k\to\omega}Card(A_k)/n_k$. Then $\bb_\omega^0$ is an algebra of sets and $\mu_\omega$ is a pre-measure. Due to Carathéodory's extension theorem, $\mu_\omega$ can be extended to $\bb_\omega^1$, the $\sigma$-algebra generated by $\bb_\omega^0$. Finally, we extend $\mu_\omega$ to $\bb_\omega$, the closure of $\bb_\omega^1$ under the measure $\mu_\omega$.

\subsection{The universal sofic action}

Let $p=\Pi_{k\to\omega}p_k\in\Pi_{k\to\omega}Sym(n_k)$. Then $p\big((x_k)_k\big)=\big(p_k(x_k)\big)_k$ defines an automorphism of $(X_\omega,\mu_\omega)$. Note the key role played by the measure $\mu_\omega$: the permutations $p_k$ are determined up to some error in the Hamming distance. This translates to the fact that the above automorphism of $X_\omega$ is well defined up to a set of $\mu_\omega$-measure $0$.

If $f\in L^\infty(X_\omega)$, we denote by $p(f)\in L^\infty(X_\omega)$ the function $p(f)(x)=f(p(x))$. If we identify $L^\infty(X_\omega)$ with $\Pi_{k\to\omega}D_{n_k}$ and $\Pi_{k\to\omega}Sym(n_k)$ with $\Pi_{k\to\omega}P_{n_k}$, both of which are subsets in $\Pi_{k\to\omega}M_{n_k}$, then $p(f)$ can be written as $p^{-1}\cdot f\cdot p$.

\subsection{The standard part.} The standard part function plays a key role in non-standard analysis. In our context it is defined as $St:X_\omega\to [0,1]$, $St\big({(x_k)_k}\big)=\lim_{k\to\omega}\frac{x_k}{n_k}$. It is a measure preserving function in the sense that $\mu_\omega(St^{-1}(A))=\mu(A)$, for every Lebesgue measurable set $A\subset[0,1]$, where $\mu$ is the Lebesgue measure, see Proposition \ref{canonic factoring}.  As a consequence we also have an inclusion of algebras $L^\infty([0,1],\mu)\hookrightarrow L^\infty(X_\omega,\mu_\omega)$, by identifying $\chi_A$ with $\chi_{St^{-1}(A)}$ (the characteristic function), as we now show.

%\subsection*{The canonic factoring of the Loeb space to a standard one}
\begin{de}
Define on $X_\omega$ the equivalence relation $x\sim  y\Leftrightarrow St( x)=St( y)$.
\end{de}

\begin{p}[Theorem 4.1 of \cite{Cu}]\label{canonic factoring}
The space with measure $X_\omega/\sim$, obtained by factoring $X_\omega$ by the above equivalence relation, is canonically isomorphic to a standard space $(X,\mu)$, where $X=[0,1]$ and $\mu$ is the Lebesgue measure.
\end{p}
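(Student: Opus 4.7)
The plan is to show that the standard-part map $St$ descends to a set-theoretic bijection $\overline{St}:X_\omega/\sim \to [0,1]$, that this bijection is measurable, and that it transports $\mu_\omega$ to Lebesgue measure; these three facts together furnish the canonical isomorphism.

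First I would observe that for any $(x_k)_k \in \Pi_k\{1,\ldots,n_k\}$ the sequence $x_k/n_k$ lies in $[0,1]$, so its $\omega$-ultralimit exists, and that this limit depends only on the $\sim$-class of $(x_k)_k$. Hence $\overline{St}:X_\omega/\sim \to [0,1]$ is well defined, and injective by the very definition of $\sim$. Surjectivity is immediate from the lift $x_k=\lfloor tn_k\rfloor$ for any $t\in[0,1]$.

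Next I would establish the key measurability identity: for $0\le a<b\le 1$, set $A_k^{(n)}=\{j\in\{1,\ldots,n_k\}\,:\, a+1/n<j/n_k<b-1/n\}$ and prove
\[
St^{-1}\bigl((a,b)\bigr)=\bigcup_{n\ge 1} \Pi_{k\to\omega}A_k^{(n)}.
\]
The inclusion $\supseteq$ is routine. For $\subseteq$ I would use the characterisation of ultralimits: if $\lim_{k\to\omega}x_k/n_k=t\in(a,b)$, pick $n$ with $t\in(a+2/n,b-2/n)$; then $\{k:|x_k/n_k-t|<1/n\}\in\omega$ and on this set $a+1/n<x_k/n_k<b-1/n$. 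This places $St^{-1}((a,b))\in\bb_\omega^1$, and a Dynkin $\pi$-$\lambda$ argument extends the conclusion to all Borel subsets of $[0,1]$, while completeness of $\bb_\omega$ under $\mu_\omega$ extends it further to Lebesgue measurable sets.

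Continuity from below of $\mu_\omega$ together with the definition $\mu_\omega(\Pi_{k\to\omega}A_k^{(n)})=\lim_{k\to\omega}|A_k^{(n)}|/n_k=b-a-2/n$ yields $\mu_\omega(St^{-1}((a,b)))=b-a$. Therefore $\mu_\omega\circ St^{-1}$ agrees with Lebesgue measure on the $\pi$-system of open intervals, hence on the Borel $\sigma$-algebra by uniqueness of extension, and on Lebesgue measurable sets by completion. It follows that $\overline{St}$ is an isomorphism between $X_\omega/\sim$ (equipped with the pushforward of $\mu_\omega$) and $([0,1],\mu)$; the construction involves no auxiliary choices, so the isomorphism is canonical.

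The main technical point is the identity $St^{-1}((a,b))=\bigcup_n \Pi_{k\to\omega}A_k^{(n)}$: without it one cannot see why $St^{-1}$ of an open interval is measurable in $\bb_\omega$, since such a preimage is not itself of the form $\Pi_{k\to\omega}A_k$ but only a countable union of such. Once this identity is in hand, the rest reduces to standard Carath\'eodory extension and completion arguments.
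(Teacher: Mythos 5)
Your construction of the bijection $\overline{St}$ and your identity $St^{-1}\bigl((a,b)\bigr)=\bigcup_{n}\Pi_{k\to\omega}A_k^{(n)}$ are correct, and this half of the argument is essentially the same as the paper's: the paper likewise expresses $St^{-1}\bigl([0,\lambda)\bigr)$ as a countable union of sets of the form $\Pi_{k\to\omega}\{x_k^j,\ldots,x_k^{j+1}-1\}$ sandwiched between $St^{-1}\bigl((\lambda_j,\lambda_{j+1})\bigr)$ and $St^{-1}\bigl([\lambda_j,\lambda_{j+1}]\bigr)$, and then evaluates the measure. Your increasing-union-plus-continuity-from-below variant versus the paper's disjoint-union-plus-countable-additivity variant is a cosmetic difference. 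This establishes $\bb^1\subset\bb^2$ in the paper's notation, i.e.\ that $St^{-1}$ of every Lebesgue measurable set is $\mu_\omega$-measurable of the same measure.

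However, there is a genuine gap: you never prove the reverse inclusion, namely that if $St^{-1}(Y)$ is $\bb_\omega$-measurable then $Y$ is Lebesgue measurable. The quotient space $X_\omega/\sim$ carries the quotient $\sigma$-algebra, which under your bijection $\overline{St}$ is exactly $\{Y\subset[0,1]:St^{-1}(Y)\in\bb_\omega\}$. You show this contains the Lebesgue $\sigma$-algebra and that the measures agree there, which gives an injection of measure algebras, but for $\overline{St}$ to be an \emph{isomorphism} onto $([0,1],\mu)$ you must also rule out the quotient $\sigma$-algebra being strictly larger. This does not follow from Carath\'eodory extension or completion; it is the nontrivial direction of the Loeb--Lebesgue comparison. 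The paper handles it by observing that $St(A)$ is closed for every $A\in\bb_\omega^0$ and deducing $\bb^2\subset\bb^1$; the standard complete argument uses inner regularity of $\mu_\omega$ by sets of $\bb_\omega^0$ together with this closedness to show that such a $Y$ satisfies $\mu_*(Y)=\mu^*(Y)$. You should add this step (or an equivalent one) to close the proof.
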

\begin{proof}
Firstly, we show the equality for the Borel structure. Let $\bb^1$ be the $\sigma$-algebra on $[0,1]$ generated by intervals, aka Borel sets. Then $\bb$ is the closure of $\bb^1$ under $\mu$.

Let $\bb^2$ be the $\sigma$-algebra on $X$ induced by the map $St:X_\omega\to X$, i.e. $\bb^2=\{Y:St^{-1}(Y)\in\bb_\omega^1\}$. We want to show that $\bb^1=\bb^2$ and $\mu_\omega(St^{-1}(Y))=\mu(Y)$ for any such $Y$.

For $A\in\bb_\omega^0$ the reader can check that $St(A)\subset [0,1]$ is a closed set. As $\bb_\omega^0$ generates $\bb_\omega^1$, it follows that $St(A)\in\bb^1$ for any $A\in\bb_\omega^1$. As $Y=St(St^{-1}(Y))$, we get that $\bb^2\subset\bb^1$.

Now let $\lambda_1,\lambda_2\in [0,1]$, $\lambda_1<\lambda_2$. Let $\{x_k\}_k,\{y_k\}_k$ be sequences of natural numbers such that $\lim_{k\to\omega}\frac{x_k}{n_k}=\lambda_1$ and $\lim_{k\to\omega}\frac{y_k}{n_k}=\lambda_2$. Let $A=\Pi_{k\to\omega}\{x_k,x_k+1,\ldots,y_k\}\in\bb_\omega^0$. It can be checked that:
\[St^{-1}\big((\lambda_1,\lambda_2)\big)\subset A\subset St^{-1}\big([\lambda_1,\lambda_2]\big).\]
Both of these inequalities are strict, but this is not important to the argument. Now fix $\lambda\in (0,1]$ and let $\{\lambda_k\}_k$ be a strictly increasing sequence converging to $\lambda$, with $\lambda_0=0$. For every $j\in\nz$, let $\{x_k^j\}_k$ be a sequence such that $\lim_{k\to\omega}\frac{x_k^j}{n_k}=\lambda_j$, with $x_k^0=1$. Define $A_j=\Pi_{k\to\omega}\{x_k^j,\ldots,x_k^{j+1}-1\}\in\bb_\omega^0$. By the above inequalities:
\[St^{-1}\big([0,\lambda)\big)=\bigcup_jA_j.\]
This implies that $[0,\lambda)\in\bb^2$ for any $\lambda$. Moreover $\mu_\omega\left(St^{-1}([0,\lambda))\right)=\sum_j\mu_\omega(A_j)=\sum_j(\lambda_{j+1}-\lambda_j)=\lambda$.
So $\bb^1\subset\bb^2$ and $St:(X_\omega,\bb_\omega^1,\mu_\omega)\to (X,\bb^1,\mu)$ is measure preserving. The same is true for $St:(X_\omega,\bb_\omega,\mu_\omega)\to (X,\bb,\mu)$.
\end{proof}

\begin{nt}
We denote by $St^*\big(L^\infty(X,\mu)\big)$ the subalgebra $\{f\circ St:f\in L^\infty(X,\mu)\}\subset L^\infty(X_\omega,\mu_\omega)$.
\end{nt}

The last theorem shows that $X_\omega$ is a fibre bundle over $X$.

\subsection{The order relation}

\begin{de}
For $ x={(x_k)_k}, y={(y_k)_k}\in X_\omega$ define $ x\leqslant y$ if $\{k:x_k\leqslant y_k\}\in\omega$.
\end{de}

It can be checked that this is a total order relation, with antisymmetry following due to the algebraic ultraproduct construction ($ x= y$ iff $\{k:x_k=y_k\}\in\omega$). We now define \emph{initial segments}:

\begin{de}
For $x\in X_\omega$, denote by $I_x=\{ y\in X_\omega: y\leqslant x\}$. 
\end{de}

Note that $\mu_\omega(I_x)=St(x)$. Moreover:

\begin{p}\label{initial segments}
For $x,y\in X_\omega$, $I_x=I_y$ if and only if $St(x)=St(y)$. Also $I_x=\{y:St(y)<St(x)\}$.
\end{p}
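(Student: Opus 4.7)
The quantitative ingredient is the identity $\mu_\omega(I_x) = St(x)$ recorded just above the statement, and I will interpret the asserted equalities in the measure algebra, i.e.\ modulo $\mu_\omega$-null sets; literal set equality $I_x = I_y$ would in fact force $x = y$ by antisymmetry of the total order $\leqslant$, a conclusion strictly stronger than $St(x) = St(y)$.

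For the second claim I would establish the two inclusions
\[\{y : St(y) < St(x)\} \subseteq I_x \subseteq \{y : St(y) \leqslant St(x)\}.\]
The left inclusion is pointwise: if $\{k : y_k < x_k\}$ were not in $\omega$ then its complement $\{k : y_k \geqslant x_k\}$ would be, forcing $\lim_{k\to\omega} y_k/n_k \geqslant \lim_{k\to\omega} x_k/n_k$ and contradicting $St(y) < St(x)$; hence $y < x$ and $y \in I_x$. The right inclusion is the direct implication $\{k : y_k \leqslant x_k\} \in \omega \Rightarrow St(y) \leqslant St(x)$. The set sandwiched between them, $\{y : St(y) = St(x)\} = St^{-1}(\{St(x)\})$, has $\mu_\omega$-measure zero by Proposition \ref{canonic factoring}, since $St$ pushes $\mu_\omega$ forward to Lebesgue measure and singletons are Lebesgue-null; hence $I_x = \{y : St(y) < St(x)\}$ modulo null sets.

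The equivalence in the first claim then follows at once. The forward direction uses $\mu_\omega(I_x) = St(x)$: if $I_x = I_y$ in the measure algebra, their measures coincide, giving $St(x) = St(y)$. Conversely, if $St(x) = St(y)$, totality of the order on $X_\omega$ gives, say, $x \leqslant y$; transitivity yields $I_x \subseteq I_y$, and $\mu_\omega(I_y \setminus I_x) = St(y) - St(x) = 0$.

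The only subtlety I anticipate is bookkeeping around the measure-algebra reading of the statement; once that convention is fixed, the argument is a routine combination of the total order, antisymmetry of $\leqslant$, and the measure-preserving property of $St$ established in Proposition \ref{canonic factoring}.
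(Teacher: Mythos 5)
Your argument is correct and follows essentially the same route as the paper's: both parts reduce to measure computations using $\mu_\omega(I_x)=St(x)$ and the fact that $St$ pushes $\mu_\omega$ forward to Lebesgue measure, with set equalities read modulo $\mu_\omega$-null sets. The only differences are cosmetic: where the paper asserts $\mu_\omega(I_x\Delta I_y)=|St(x)-St(y)|$ as something that ``can be checked,'' you verify the needed inclusions pointwise via the ultrafilter, and you make explicit the measure-algebra interpretation that the paper leaves implicit.
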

\begin{proof}
It can be checked that $\mu_\omega(I_x\Delta I_y)=|St(x)-St(y)|$ for any $x,y\in X_\omega$. This implies the first statement. For the second part, notice that $\{y:St(y)<St(x)\}=St^{-1}\big([0,St(x))\big)$, so $\mu_\omega(\{y:St(y)<St(x)\})=St(x)=\mu_\omega(I_x)$. As $\{y:St(y)<St(x)\}\subset I_x$, the conclusion now follows.
\end{proof}

\section{Generalised maps}

We now proceed to the main definitions of this paper.

\begin{de}\label{GM}
Let $p\in\Pi_{k\to\omega}Sym(n_k)$. We say that $p|_X$ \emph{exists} if there is $\vp:X\to X$ such that $St(p(x))=\vp(St(x))$ for $\mu_\omega$-almost all $x\in X_\omega$. In this case we write $p|_X=\vp$.
\end{de}

\begin{de}
Denote by $\mathcal{GM}$ the set of elements $p\in\Pi_{k\to\omega}Sym(n_k)$ such that $p|_X$ exists, and by $\mathcal{GA}$ the set of such elements for which $p|_X$ is an automorphism of $(X,\mu)$.
\end{de}

\begin{p}
Let $p\in\Pi_{k\to\omega}Sym(n_k)$. If $p|_X=\vp$ exists, then $\vp$ is a measurable, measure-preserving map.
\end{p}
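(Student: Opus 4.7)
The plan is to reduce everything to familiar properties of the standard part map by packaging the data in a genuinely measurable auxiliary function. First I would introduce the composition $\psi:=St\circ p:X_\omega\to X$; by Proposition~\ref{canonic factoring} and the $\mu_\omega$-preserving action of $p$, the map $\psi$ is $(\bb_\omega,\bb)$-measurable with $\psi_*\mu_\omega=\mu$. The hypothesis that $p|_X=\vp$ asserts precisely that $\psi=\vp\circ St$ holds on a set $E\subset X_\omega$ of full $\mu_\omega$-measure, so all information about the set-theoretic $\vp$ is captured by the measurable $\psi$.

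To establish measurability of $\vp$, I would fix a Borel $A\subset X$ and consider the $\mu$-absolutely continuous measure
\[
\nu(A'):=\mu_\omega\bigl(\psi^{-1}(A)\cap St^{-1}(A')\bigr),\qquad A'\in\bb,
\]
whose Radon--Nikodym derivative $g:X\to[0,1]$ is Lebesgue measurable. Disintegrating $\mu_\omega$ along the factor $St$ as $\mu_\omega=\int_X\mu_\lambda\,d\mu(\lambda)$, one identifies $g(\lambda)=\mu_\lambda(\psi^{-1}(A))$ for $\mu$-a.e.\ $\lambda$. Since $\mu_\omega(E)=1$, Fubini forces $\mu_\lambda(E)=1$ for $\mu$-a.e.\ $\lambda$, and on $E$ the identity $\psi=\vp\circ St$ shows that $\psi^{-1}(A)\cap St^{-1}(\{\lambda\})\cap E$ equals either $St^{-1}(\{\lambda\})\cap E$ or $\emptyset$, according to whether $\vp(\lambda)\in A$ or not. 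Hence $g(\lambda)=\chi_{\vp^{-1}(A)}(\lambda)$ $\mu$-almost everywhere, so $\vp^{-1}(A)$ coincides with the Lebesgue measurable set $\{g=1\}$ modulo a $\mu$-null set and therefore belongs to $\bb$.

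The measure-preservation property then follows from a short direct computation:
\[
\mu(\vp^{-1}(A))=\mu_\omega(St^{-1}(\vp^{-1}(A)))=\mu_\omega(\psi^{-1}(A))=\mu(A),
\]
using that $St^{-1}(\vp^{-1}(A))$ and $\psi^{-1}(A)$ agree on $E$ and hence coincide in $\bb_\omega$ modulo a null set. The main obstacle is the middle step: since $\vp$ is a priori only a set-theoretic map, measurability must be extracted from the almost-everywhere equation $\psi=\vp\circ St$, and it is the disintegration/Radon--Nikodym argument that converts that a.e.\ identity into genuine Lebesgue measurability of the preimages $\vp^{-1}(A)$.
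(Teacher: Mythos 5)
Your reduction to $\psi=St\circ p$ and the closing measure-preservation computation are fine, but the central step --- the disintegration $\mu_\omega=\int_X\mu_\lambda\,d\mu(\lambda)$ with each $\mu_\lambda$ concentrated on the fibre $St^{-1}(\{\lambda\})$ --- is a genuine gap as written. The classical disintegration/regular conditional probability theorems require the total space to be standard Borel (or Radon), and the Loeb space $(X_\omega,\bb_\omega,\mu_\omega)$ is neither: it is a non-separable, non-standard probability space, and for general probability spaces a fibrewise disintegration over a factor map can fail to exist. Such a disintegration does in fact exist for Loeb measures (they are compact, hence perfect), but that is a non-trivial result of non-standard measure theory which you would have to invoke explicitly; nothing in the paper supplies it. Note also that the disintegration is doing all the work: the conditional expectation $g$ exists for free, but identifying $g$ with $\chi_{\vp^{-1}(A)}$ without the fibre measures would require knowing that $\chi_{\psi^{-1}(A)}$ agrees a.e.\ with an $St^{-1}(\bb)$-measurable function, which is essentially the measurability of $\vp^{-1}(A)$ you are trying to prove, so the argument cannot be closed by conditional expectations alone.

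The machinery is also unnecessary. The a.e.\ identity $\vp(St(x))=St(p(x))$ yields, by a one-line pointwise computation, that $St^{-1}\bigl(\vp^{-1}(A)\bigr)=p^{-1}\bigl(St^{-1}(A)\bigr)$ up to a $\mu_\omega$-null set; the right-hand side lies in $\bb_\omega$, and Proposition \ref{canonic factoring} identifies the Lebesgue measurable sets of $X$ with the sets $Y$ for which $St^{-1}(Y)\in\bb_\omega$, so $\vp^{-1}(A)\in\bb$ immediately. This is the paper's proof, and once measurability is in hand your final computation of $\mu(\vp^{-1}(A))$ goes through verbatim. If you want to keep your route, you must either cite the perfectness/compactness of Loeb measures to justify the disintegration, or replace that step by the direct preimage identity above.
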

\begin{proof}
Let $A\subset X$ be a measurable set. The equality $\vp(St(x))=St(p(x))$ for $\mu_\omega$-almost all $x\in X_\omega$ implies $St^{-1}\big(\vp^{-1}(A)\big)=p^{-1}\big(St^{-1}(A)\big)$:
\begin{align*}
St^{-1}\big(\vp^{-1}(A)\big)=&\{x:St(x)\in\vp^{-1}(A)\}=\{x:\vp(St(x))\in A\}=\{x:St(p(x))\in A\}\\
=&\{x:p(x)\in St^{-1}(A)\}=\{x:x\in p^{-1}(St^{-1}(A))\}=p^{-1}\big(St^{-1}(A)\big).
\end{align*}
As $p^{-1}\big(St^{-1}(A)\big)\in\bb_\omega$ this equality implies $\vp^{-1}(A)$ is measurable in $X$. Moreover:
\[\mu(\vp^{-1}(A))=\mu_\omega(St^{-1}(\vp^{-1}(A)))=\mu_\omega(p^{-1}(St^{-1}(A)))= \mu_\omega(St^{-1}(A))=\mu(A).\]
We show later that every measure preserving map can be obtained as $p|_X$ for some $p$, Theorem \ref{all maps}.
\end{proof}

\begin{p}
Let $p\in\Pi_{k\to\omega}Sym(n_k)$. If $p$ and $p^{-1}$ are elements of $\mathcal{GM}$, then $p\in\mathcal{GA}$.
\end{p}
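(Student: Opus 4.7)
The plan is to show that if $\varphi := p|_X$ and $\psi := p^{-1}|_X$ both exist, then they are mutual inverses as measure-preserving maps of $(X,\mu)$, hence each is an automorphism. From the previous proposition we already know that $\varphi$ and $\psi$ are measurable and measure-preserving, so the only remaining task is invertibility up to $\mu$-null sets.

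The key computation is to compose the two defining identities. By hypothesis there is a $\mu_\omega$-conull set $E \subset X_\omega$ on which $St(p(x)) = \varphi(St(x))$ and a $\mu_\omega$-conull set $F$ on which $St(p^{-1}(x)) = \psi(St(x))$. Since $p$ preserves $\mu_\omega$ (a fact built into the definition of the universal sofic action), the set $p^{-1}(E) \cap F$ is again $\mu_\omega$-conull. On this set we compute
\[
\varphi(\psi(St(x))) \;=\; \varphi(St(p^{-1}(x))) \;=\; St(p(p^{-1}(x))) \;=\; St(x),
\]
and symmetrically $\psi(\varphi(St(x))) = St(x)$ holds off a $\mu_\omega$-null set.

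To turn these into equalities of maps on $X$, I would invoke the measure-preserving property of $St:(X_\omega,\mu_\omega)\to(X,\mu)$ from Proposition \ref{canonic factoring}. If $N\subset X$ were the set on which $\varphi\circ\psi\ne\mathrm{id}$, then $St^{-1}(N)$ would be the set on which $\varphi(\psi(St(x)))\ne St(x)$, so $\mu(N)=\mu_\omega(St^{-1}(N))=0$. Thus $\varphi\circ\psi=\mathrm{id}$ and $\psi\circ\varphi=\mathrm{id}$ hold $\mu$-almost everywhere on $X$, which together with the measure-preserving property from the previous proposition shows that $\varphi\in\mathrm{Aut}(X,\mu)$, i.e.\ $p\in\mathcal{GA}$.

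The only subtle point is the bookkeeping of null sets: one has to be careful that the a.e.\ statements on $X_\omega$ push down to a.e.\ statements on $X$, which is exactly where the measure-preserving property of $St$ is used. Everything else is a one-line composition, so I do not expect a serious obstacle.
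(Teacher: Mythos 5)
Your proposal is correct and follows essentially the same route as the paper: compose the two defining identities along the orbit of $p$ on a conull subset of $X_\omega$, push the resulting a.e.\ identity down to $X$ via the measure-preserving map $St$, and conclude that the two measure-preserving maps are mutually inverse, hence automorphisms. (One trivial slip: the conull set you need is $p(E)\cap F$ rather than $p^{-1}(E)\cap F$, but since $p$ preserves $\mu_\omega$ both are conull and nothing is affected.)
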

\begin{proof}
As $p$ and $p^{-1}$ are in $\mathcal{GM}$, there exist $\varphi: X\rightarrow X$ and $\psi: X\rightarrow X$ such that $p|_X=\varphi$ and $p^{-1}|_X=\psi$. 
Then, for almost all $x\in X_\omega$ we have:
\[\psi\circ \varphi(St(x))=\psi(St(p(x)))=St(p^{-1}(p(x)))=St(x).\]
It follows that $\psi\circ\vp=Id$. These are measure preserving maps, so $\vp$ is an automorphism of 
$(X,\mu)$ and therefore $p\in \mathcal{GA}$.
\end{proof}

Before proving that each measure preserving function is obtained as a $p|_X$, for a $p\in\Pi_{k\to\omega}Sym(n_k)$, we need the following well know lemma. It contains a basic principle of measure theory: by controlling the behaviour on sets, we control the behaviour on points a.e.

\begin{lemma}\label{sets to points}
Let $(Y,\nu)$ be a $\sigma$-finite measure space and $f,g: Y\rightarrow \mathbb{R}$ be measurable functions such that $f^{-1}(B)=g^{-1}(B)$ a.e. for every measurable set $B\subset \mathbb{R}$. Then $f(y)=g(y)$ for $\nu$-almost every $y\in Y$.  
\end{lemma}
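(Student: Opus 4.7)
The plan is to reduce the uncountable collection of conditions ``$f^{-1}(B)=g^{-1}(B)$ a.e.\ for every measurable $B$'' to a countable collection, using that $\mathbb{R}$ has a countable base of Borel sets that separates points. Concretely, I would take the family $\{(-\infty,q) : q \in \mathbb{Q}\}$: this is countable, and for any two distinct real numbers there is a rational strictly between them, so this family distinguishes points.

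For each $q \in \mathbb{Q}$, the hypothesis gives a $\nu$-null set $N_q$ with
\[
f^{-1}\bigl((-\infty,q)\bigr) \,\triangle\, g^{-1}\bigl((-\infty,q)\bigr) \subseteq N_q.
\]
I would then set $N := \bigcup_{q \in \mathbb{Q}} N_q$, which is still $\nu$-null because $\mathbb{Q}$ is countable. For every $y \notin N$ and every $q \in \mathbb{Q}$, the equivalence $f(y) < q \Longleftrightarrow g(y) < q$ holds. Since the set of rationals strictly above a real number determines the number, this forces $f(y) = g(y)$ off the null set $N$, which is exactly the desired conclusion.

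There is no serious obstacle: the argument is a standard separability trick and does not even use $\sigma$-finiteness (that hypothesis is inherited from the ambient setting but plays no role here). The only mild subtlety is that one must not try to quantify over \emph{all} Borel $B$ simultaneously, since then the exceptional null set could a priori depend on $B$ and the union would be uncountable; choosing a countable separating generating family at the outset sidesteps this issue. If one prefers to use $(-\infty,q]$ or $\{(-\infty,q) : q\in D\}$ for any countable dense $D \subset \mathbb{R}$, the argument is identical.
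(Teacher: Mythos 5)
Your proof is correct. It rests on the same underlying idea as the paper's --- reducing the uncountable family of hypotheses to a countable separating family of test sets in $\mathbb{R}$ --- but the execution differs. The paper argues by contradiction: it writes $\mathbb{R}\times\mathbb{R}\setminus\Delta$ as a countable union of rectangles $I_n\times J_n$ with $I_n\cap J_n=\emptyset$, extracts a positive-measure set $A\subseteq\{f\neq g\}$ with $(f,g)(A)\subseteq I\times J$ for one such rectangle, and then derives $f^{-1}(I)\neq g^{-1}(I)$ from $A\subseteq f^{-1}(I)$ and $g^{-1}(I)\cap A=\emptyset$. You instead test directly against the half-lines $(-\infty,q)$, $q\in\Q$, collect the countably many exceptional null sets into one null set $N$, and conclude $f=g$ off $N$ because rationals separate points. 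Your route is more direct and avoids both the contradiction and the (slightly fussy) rectangle decomposition of the off-diagonal; the paper's route generalises a bit more readily to targets other than $\mathbb{R}$ where one has a countable family of disjoint pairs of sets covering the off-diagonal but no natural order. You are also right that $\sigma$-finiteness of $(Y,\nu)$ is not used in either argument --- a countable union of null sets is null in any measure space.
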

\begin{proof}
Let us assume that $\nu(\{y\in Y | f(y)\neq g(y) \})>0$. If $\Delta=\{(x,x) :  x\in \mathbb{R}\}$ one can find closed-open intervals $I_n$, $J_n$, $n\geq 1$ such that $I_n\cap J_n=\emptyset$ and:
\[\mathbb{R}\times \mathbb{R}\setminus \Delta =\bigcup_{n=1}^\infty I_n\times J_n.\]
Then there exist a measurable set $A\subset Y$, $\nu(A)>0$ and two closed-open intervals  $I$ and $J$ with $I\cap J=\emptyset$ and  such that
\[\{(f(y),g(y) ):  y\in A \}\subset I\times J.\]
Thus $f(A)\subset I$ and $g(A)\subset J$, so $A\subseteq f^{-1}(I)$ and $A\subseteq g^{-1}(J)$. On the other hand: 
\[g^{-1}(I)\cap A\subseteq g^{-1}(I)\cap g^{-1}(J)=g^{-1}(I\cap J)=\es.\]
As $\nu(A)>0$, this implies that $f^{-1}(I)\neq g^{-1}(I)$ which is a contradiction. 
\end{proof}

\begin{te}\label{all maps}
Let $\vp:X\to X$ be a measure preserving map. Then there exists $p\in\Pi_{k\to\omega}Sym(n_k)$ such that $p|_X=\vp$.
\end{te}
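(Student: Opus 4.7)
The plan is to build $p=\Pi_{k\to\omega}p_k$ by choosing, for each $k$, a dyadic interval exchange $\Phi_k$ of some level $m_k\to\infty$ approximating $\vp$ in measure, and letting $p_k$ permute consecutive blocks of $\{1,\ldots,n_k\}$ according to $\Phi_k$. By Lemma \ref{sets to points} applied to $St\circ p$ and $\vp\circ St$ viewed as maps $(X_\omega,\mu_\omega)\to[0,1]$, it suffices to prove the set-level identity $p^{-1}(St^{-1}(J))=St^{-1}(\vp^{-1}(J))$ modulo $\mu_\omega$-null sets for every dyadic interval $J\subset[0,1]$, since such intervals generate the Borel $\sigma$-algebra.

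Enumerate the dyadic intervals $J_1,J_2,\ldots$ of $[0,1]$. Using the weak density of dyadic interval exchanges in $Aut(X,\mu)$, I pick for each $k$ an integer $m_k$ with $2^{m_k}=o(n_k)$ and $m_k\to\infty$, and a permutation $\sigma_k\in Sym(2^{m_k})$, so that the associated dyadic interval exchange $\Phi_k$ (which affinely permutes the $2^{m_k}$ equal subintervals of $[0,1]$ via $\sigma_k$) satisfies $\mu(\Phi_k^{-1}(J_i)\triangle\vp^{-1}(J_i))<1/k$ for all $i\leq k$. I then partition $\{1,\ldots,n_k\}$ into $2^{m_k}$ consecutive blocks $B_j^k$ of nearly equal size and define $p_k\in Sym(n_k)$ by sending $B_j^k$ to $B_{\sigma_k(j)}^k$ via an order-preserving bijection, absorbing the $O(2^{m_k})$ block-size discrepancy by an arbitrary correction.

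To verify the set-level identity, fix $J=J_{i_0}$ and $\epsilon>0$, and choose a finite union $U$ of dyadic intervals with $\mu(\vp^{-1}(J)\triangle U)<\epsilon$. Setting $D_k(A):=\{i\in\{1,\ldots,n_k\}:i/n_k\in A\}$, the block structure gives $|p_k^{-1}(D_k(J))\triangle D_k(\Phi_k^{-1}(J))|=O(2^{m_k})$. Together with $\mu(\Phi_k^{-1}(J)\triangle\vp^{-1}(J))<1/k$ and $\mu(\vp^{-1}(J)\triangle U)<\epsilon$, this yields
\[
\mu_\omega\bigl(p^{-1}(St^{-1}(J))\triangle St^{-1}(U)\bigr)=\lim_{k\to\omega}\frac{|p_k^{-1}(D_k(J))\triangle D_k(U)|}{n_k}\leq\epsilon,
\]
and since $\mu_\omega(St^{-1}(U)\triangle St^{-1}(\vp^{-1}(J)))<\epsilon$, letting $\epsilon\to 0$ yields the desired identity modulo $\mu_\omega$-null sets and Lemma \ref{sets to points} gives $p|_X=\vp$. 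The main obstacle is that $\vp^{-1}(J)$ is typically a wild measurable set, not a union of dyadic intervals, so its discretization cannot be directly compared to $p_k^{-1}(D_k(J))$; the two-stage approximation (first of $\vp$ by $\Phi_k$, diagonalized over countably many test intervals, then of $\vp^{-1}(J)$ by $U$) is what allows vanishing symmetric differences at finite scales to coalesce into genuine null-set equalities in the ultraproduct.
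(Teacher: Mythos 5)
Your argument is correct, but it takes a different, more self-contained route than the paper. The paper never touches the finite levels: it invokes Lemma 3.2 and Proposition 3.3 of \cite{Pa1} to match, inside the ultraproduct, the two equal-measure partitions $\{St^{-1}(A_i)\}_i$ and $\{St^{-1}(\vp^{-1}(A_i))\}_i$ for a finite partition $\{A_i\}_i$ of $X$, then diagonalises over finer and finer partitions to get $q$ with $q(St^{-1}(A))=St^{-1}(\vp^{-1}(A))$ for all $A$, and sets $p=q^{-1}$. You instead build each $p_k$ explicitly as a block permutation of $\{1,\ldots,n_k\}$ modelled on a dyadic interval exchange $\Phi_k$ approximating $\vp$, folding the diagonalisation into the choice of $m_k$ and $\Phi_k$; the two-stage approximation you describe at the end is exactly what replaces the paper's appeal to the external matching lemma. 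Both proofs converge on the same final step, namely Lemma \ref{sets to points} applied to $St\circ p$ and $\vp\circ St$. What your version buys is independence from \cite{Pa1} and an explicit description of the $p_k$; what it costs is the extra bookkeeping of discretisation errors ($O(2^{m_k})$ boundary points, the requirement $2^{m_k}=o(n_k)$), all of which you handle correctly. One small imprecision: the theorem is stated for a measure-preserving \emph{map} $\vp$, which need not be invertible, so "weak density of dyadic interval exchanges in $Aut(X,\mu)$" is not literally the statement you need. What you actually use is that $A\mapsto\vp^{-1}(A)$ is a measure-preserving homomorphism of measure algebras, so that the finite partition generated by $\{\vp^{-1}(J_i)\}_{i\le k}$ has the same measure profile as the one generated by $\{J_i\}_{i\le k}$ and can therefore be matched, up to error $1/k$, by an invertible dyadic exchange. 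This is elementary and your construction goes through verbatim, but the justification should be phrased that way rather than as density in $Aut(X,\mu)$.
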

\begin{proof}
We need to construct $p\in\Pi_{k\to\omega}Sym(n_k)$ such that $St^{-1}(\varphi^{-1}(A))=p^{-1}(St^{-1}(A))$ for any $A\subset X$. The proof is similar to the one in \cite{Pa1}, Proposition 3.3. Choose $\{A_i\}_i$ a finite partition of $X$. Then $\{St^{-1}(A_i)\}_i$ and $\{St^{-1}(\vp^{-1}(A_i))\}_i$ are two partitions of $X_\omega$ with $\mu_\omega(St^{-1}(A_i))=\mu_\omega(St^{-1}(\vp^{-1}(A_i)))$ for any $i$. By Lemma 3.2 in \cite{Pa1}, there exists $q_1\in\Pi_{k\to\omega}Sym(n_k)$ such that $q_1(St^{-1}(A_i))=
St^{-1}(\vp^{-1}(A_i))$ for any $i$. We construct a sequence of elements $q_j$ that work for finer and finer partitions of $X$. By a diagonal argument we construct $q$ such that 
$q(St^{-1}(A))=St^{-1}(\vp^{-1}(A))$ for any $A\subset X$. Define $p=q^{-1}$. By the previous lemma, applied for the functions $St\circ p,\vp\circ St:X_\omega\to X$ we get that $St(p(x))=\vp(St(x))$ for almost all $x\in X_\omega$.
\end{proof}

\subsection{Generalised automorphisms}

\begin{p}\label{restricted permutation}
For $p\in\Pi_{k\to\omega}Sym(n_k)$, $p|_X=Id$ if and only if $p(I_x)=I_x$ for ($\mu_\omega$-almost) all $x\in X_\omega$.
\end{p}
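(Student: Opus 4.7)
The plan is to leverage Proposition \ref{initial segments}: the initial segment $I_x$ depends only on $\lambda := St(x)$ and, modulo null sets, equals $St^{-1}([0,\lambda))$. This turns both sides of the equivalence into statements purely about how $p$ interacts with the standard part map.

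For the forward direction, suppose $p|_X = \id$, i.e., $St(p(y)) = St(y)$ for $\mu_\omega$-almost every $y$. Fix $x \in X_\omega$ and set $\lambda = St(x)$. Then
\[p^{-1}(I_x) = \{y : St(p(y)) < \lambda\} = \{y : St(y) < \lambda\} = I_x\]
modulo null sets, using Proposition \ref{initial segments} at both ends. Since $p$ is a bijection of $X_\omega$ (being the class of a coordinate-wise permutation), we get $p(I_x) = I_x$ a.e.\ for every $x$, in particular for $\mu_\omega$-a.e.\ $x$.

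For the backward direction, suppose $p(I_x) = I_x$ a.e.\ for $\mu_\omega$-a.e.\ $x$. Because $I_x$ depends only on $St(x)$ and $St$ pushes $\mu_\omega$ forward to Lebesgue measure on $[0,1]$, the hypothesis rephrases as: for Lebesgue-a.e.\ $\lambda \in [0,1]$, the sets $\{y : St(p(y)) < \lambda\}$ and $\{y : St(y) < \lambda\}$ agree modulo a $\mu_\omega$-null set $N_\lambda$. Pick a countable dense $D \subset [0,1]$ entirely contained in the set of good $\lambda$ (possible since the bad $\lambda$ form a Lebesgue-null set). Let $N = \bigcup_{\lambda \in D} N_\lambda$, a $\mu_\omega$-null set. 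For every $y \notin N$, the reals $St(p(y))$ and $St(y)$ lie on the same side of every $\lambda \in D$, and so must coincide by density of $D$. Hence $St \circ p = St$ a.e., i.e., $p|_X = \id$.

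The main obstacle is the backward direction, specifically the quantifier management. The hypothesis is a statement about sets $I_x$ indexed by $x \in X_\omega$, while the conclusion demands a pointwise identity of functions evaluated at $y$. The reduction via $\lambda = St(x)$ to a statement about level sets, followed by the countable-density trick---a concrete instance of the philosophy behind Lemma \ref{sets to points} applied to the generating family of half-open intervals---is what bridges the two.
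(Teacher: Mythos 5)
Your proof is correct, and your forward direction (from $p|_X=\id$ to invariance of the $I_x$) is essentially the paper's: both rewrite $I_x$ as $\{y:St(y)<St(x)\}$ via Proposition \ref{initial segments} and push it through $p$. Where you genuinely diverge is the backward direction, which is the harder one. The paper establishes the integral identity
\[\int_{X_\omega}\mu_\omega\bigl(p(I_x)\Delta I_x\bigr)\,d\mu_\omega(x)=\int_{X_\omega}\bigl|St(y)-St(p^{-1}(y))\bigr|\,d\mu_\omega(y)\]
by Fubini, so that a.e.\ invariance of the initial segments forces the right-hand side to vanish and hence $St\circ p^{-1}=St$ a.e.\ in one stroke. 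You instead reduce the hypothesis to level sets indexed by $\lambda=St(x)$, pass to a countable dense set $D$ of good levels, and separate $St(p(y))$ from $St(y)$ pointwise off a single null set $N=\bigcup_{\lambda\in D}N_\lambda$. Your route is more elementary (no Fubini, and it sidesteps the measurability of $x\mapsto\mu_\omega(p(I_x)\Delta I_x)$ that the paper's integral tacitly uses), at the cost of the quantifier bookkeeping you rightly identify as the main obstacle; the paper's route buys a quantitative identity that it reuses almost verbatim to relate $\ell_C(p)=0$ to $p|_X=\id$ in the next section. Two small points to tighten: what you actually get from $p(I_x)=I_x$ is $\{y:St(p^{-1}(y))<\lambda\}=\{y:St(y)<\lambda\}$, not the version with $p$ --- harmless, since $p(I_x)=I_x$ and $p^{-1}(I_x)=I_x$ are equivalent mod null and $St\circ p=St$ a.e.\ iff $St\circ p^{-1}=St$ a.e.; and your claim that the bad $\lambda$ form a Lebesgue-null set presupposes their measurability, but you only need the good set to contain a countable dense subset, which follows directly: if the good levels missed an interval $(a,b)$, the good $x$ would avoid $St^{-1}((a,b))$ and could not have full $\mu_\omega$-measure.
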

\begin{proof}
Without any hypothesis on $p\in\Pi_{k\to\omega}Sym(n_k)$, we have:
\begin{align*}
\int_{ X_\omega}\mu_\omega(p(I_x)\Delta I_x)d\mu_\omega(x)=&\int_{ X_\omega}\mu_\omega(\{y:y\leqslant x\mbox{ and }p^{-1}(y)>x\mbox{ or }y>x\mbox{ and }p^{-1}(y)\leqslant x\})d\mu_\omega(x)\\
=&\int_{X_\omega}\mu_\omega(\{x:y\leqslant x\mbox{ and }p^{-1}(y)>x\mbox{ or }y>x\mbox{ and }p^{-1}(y)\leqslant x\})d\mu_\omega(y).
\end{align*}
One can check that, for $a,b\in[0,1]$, $\mu(x:a\leqslant x\mbox{ and }b>x\mbox{ or }a>x\mbox{ and }b\leqslant x\})=|a-b|$. In the Loeb measure this translates to $\mu_\omega(\{x:y\leqslant x\mbox{ and }p^{-1}(y)>x\mbox{ or }y>x\mbox{ and }p^{-1}(y)\leqslant x\})=|St(y)-St(p^{-1}(y))|$. We reach the following equation, interesting in its own right:
\[\int_{ X_\omega}\mu_\omega(p(I_x)\Delta I_x)d\mu_\omega(x)=\int_{ X_\omega}|St(y)-St(p^{-1}(y))|d\mu_\omega(y).\]

Assume now that $p(I_x)=I_x$ almost everywhere. Then $\int_{x\in X_\omega}\mu_\omega(p(I_x)\Delta I_x)d\mu_\omega(x)=0$. 
It follows that $St(y)=St(p^{-1}(y))$ for $\mu_\omega$-almost all $y\in X_\omega$ and hence the conclusion.

For the reverse implication, assume that $St(p(x))=St(x)$ almost everywhere. By Proposition \ref{initial segments}, $p(I_x)=\{p(y):St(y)<St(x)\}=\{p(y):St(p(y))<St(x)\}\subset I_x$. Hence $\mu_\omega(p(I_x)\setminus I_x)=0$ for any $x\in X_\omega$. As these sets are of equal measure, they must be equal.
\end{proof}

Rebemeber that $St^*\big(L^\infty(X,\mu)\big)=\{f\circ St:f\in L^\infty(X,\mu)\}\subset L^\infty(X_\omega,\mu_\omega)$.

\begin{te}\label{normaliser characterisation}
Let $p\in\Pi_{k\to\omega}Sym(n_k)$. Then $p\in\mathcal{GA}$ if and only if $p\big(St^{*}(L^\infty(X))\big)=St^{*}(L^\infty(X))$, i.e. $p$ is in the normaliser of $St^{*}(L^\infty(X))$ when ultraproducts are identified with subsets in $\Pi_{k\to\omega}M_{n_k}$.
\end{te}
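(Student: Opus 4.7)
The plan is to handle the two implications separately. The ``only if'' direction unwinds the definitions; the ``if'' direction is the substantive one, and proceeds by extracting a measure-preserving automorphism of $(X,\mu)$ from the $*$-automorphism of $L^\infty(X,\mu)$ that conjugation by $p$ induces, and then comparing with $St\circ p$ pointwise via Lemma \ref{sets to points}.

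For the ``only if'' direction, I assume $p\in\mathcal{GA}$ with $p|_X=\vp$. Given $f\in L^\infty(X,\mu)$, I would compute directly, for $\mu_\omega$-a.e.\ $x\in X_\omega$,
\[p(f\circ St)(x)=f(St(p(x)))=f(\vp(St(x)))=(f\circ\vp)\circ St(x),\]
which yields $p\big(St^*(L^\infty(X))\big)\subset St^*(L^\infty(X))$. For the reverse inclusion I would first observe that $p^{-1}\in\mathcal{GA}$ with $p^{-1}|_X=\vp^{-1}$: starting from $St(p(x))=\vp(St(x))$ a.e.\ and substituting $x=p^{-1}(y)$ (valid because $p$ is measure-preserving on $X_\omega$) gives $St(p^{-1}(y))=\vp^{-1}(St(y))$ a.e. The same computation applied to $p^{-1}$ then supplies the opposite inclusion.

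For the ``if'' direction, suppose $p\big(St^*(L^\infty(X))\big)=St^*(L^\infty(X))$. Since $St^*$ is an injective $*$-homomorphism, the relation $p(f\circ St)=\Phi(f)\circ St$ unambiguously defines a map $\Phi:L^\infty(X,\mu)\to L^\infty(X,\mu)$. As conjugation by $p$ is a measure-preserving $*$-automorphism of $L^\infty(X_\omega,\mu_\omega)$ and $p$ normalises $St^*(L^\infty(X))$, one checks routinely that $\Phi$ is a measure-preserving $*$-automorphism of $L^\infty(X,\mu)$. By the standard correspondence between $\Aut(X,\mu)$ and the group of measure-preserving $*$-automorphisms of $L^\infty(X,\mu)$ for a standard probability space, there exists $\vp\in\Aut(X,\mu)$ with $\Phi(f)=f\circ\vp$. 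Consequently $f(St(p(x)))=f(\vp(St(x)))$ for a.e.\ $x$ and every $f\in L^\infty(X)$; taking $f=\chi_A$ for measurable sets $A$ and applying Lemma \ref{sets to points} to the pair $St\circ p,\vp\circ St:X_\omega\to X$ yields $St(p(x))=\vp(St(x))$ a.e., so $p|_X=\vp\in\Aut(X,\mu)$ and $p\in\mathcal{GA}$.

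The main obstacle is the ``if'' direction, specifically the step from the algebraic statement that $p$ normalises $St^*(L^\infty(X))$ to the pointwise statement $St\circ p=\vp\circ St$ a.e. The cleanest route, as above, goes through the dictionary between $*$-automorphisms of $L^\infty$ of a standard probability space and measure-preserving bijections of that space; once the automorphism $\vp$ is in hand, the pointwise conclusion is a direct application of Lemma \ref{sets to points}.
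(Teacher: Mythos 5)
Your proposal is correct and follows essentially the same route as the paper: the ``only if'' direction by direct computation, and the ``if'' direction by defining $\Phi$ via $p(f\circ St)=\Phi(f)\circ St$, invoking the correspondence between automorphisms of $L^\infty(X,\mu)$ and point automorphisms of $(X,\mu)$, and finishing with Lemma \ref{sets to points}. The only cosmetic difference is in the reverse inclusion of the ``only if'' direction, where you pass to $p^{-1}$ while the paper simply substitutes $f\circ\vp^{-1}$ for $f$; both are equally valid.
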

\begin{proof}Let $p$ be in the normalizer of $St^{*}(L^\infty(X,\mu))$.  For $f\in L^\infty(X,\mu)$ we define $\Phi(f)$ to be the unique function in $L^\infty(X,\mu)$ such that $p( f\circ St)=\Phi(f)\circ St$. Hence $f\rightarrow \Phi(f)$ defines an automorphism of $L^\infty(X,\mu)$ and then, there exists a nonsingular automorphism $\varphi$ of $(X,\mu)$ such that $\Phi(f)=f\circ \varphi$ for all $f\in L^\infty(X,\mu)$. Thus:
\[p(f\circ St)=f\circ\vp\circ St,\]
for all $f\in L^\infty(X,\mu)$. It follows that for $\mu_\omega$-almost all $x\in X_\omega$ we have:
\[p(f\circ St)(x)=f\circ St(p(x))=f\circ\vp(St(x)).\]
By Lemma \ref{sets to points}, it follows that:
\[St(p(x))=\vp(St(x)).\]
By Definition \ref{GM}, this translates to $p|_X=\vp$. As $\vp\in Aut(X,\mu)$, we get $p\in\mathcal{GA}$.

Conversely, if there exists $\varphi$ an automorphism of $(X,\mu)$ such that $p|_X=\varphi$ then: 
\[p(f\circ St)(x)=f\circ St(p(x))=(f\circ\vp)\circ St(x)\in St^*(L^\infty(X,\mu)),\]
for all $f\in L^\infty(X,\mu)$. This implies $p\big(St^{*}(L^\infty(X))\big)\subset St^{*}(L^\infty(X))$ and, by replacing $f$ with $f\circ\vp^{-1}$ in the above equality, we get the reverse inclusion. 
\end{proof}

\section{Sofic representations} The purpose of this section is to prove that any sofic representation of any group $\theta:G\to\Pi_{k\to\omega}Sym(n_k)$ can be conjugated inside $\mathcal{GA}$, i.e. there exists $p\in\Pi_{k\to\omega}Sym(n_k)$ such that $p\theta p^*\subset\mathcal{GA}$. Recall that a \emph{sofic representation} is a group morphism $\theta:G\to\Pi_{k\to\omega}Sym(n_k)$ such that $\ell_H(\theta(g))=1$ for any $g\neq e$, where $\ell_H$ is the normalised Hamming length.

We do this with the help of a theorem by Elek and Lippner, \cite{El-Li}, saying that a Bernoulli shift action of a sofic group is sofic. Ozawa has a nice proof of this result (\cite{Oz}, also Theorem 3.5 of \cite{Pa1}), but it uses an amplification, that would provide a weaker result in our context. This is why we need to inspect the original proof of Elek and Lippner.

In the following theorem, the space $Y=\{0,1\}^G$ is the product space of $\{0,1\}$ endowed with the normalised cardinal measure, indexed by the countable group $G$. We denote by $\beta:G\to Aut(Y)$ the Bernoulli shift action.

\begin{te}[Proposition 7.1 of \cite{El-Li}]\label{Elek-Lippner}
Let $\theta:G\to\Pi_{k\to\omega}P_{n_k}$ be a sofic representation. There exists $\overline\theta:L^\infty(Y)\rtimes_\beta G \to\Pi_{k\to\omega}M_{n_k}$ an embedding of the crossed product that extends $\theta$, such that $\overline\theta(L^\infty(Y))\subset\Pi_{k\to\omega}D_{n_k}$.
\end{te}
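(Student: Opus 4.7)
The strategy is to obtain $\overline\theta$ from a covariant pair for the dynamical system $(L^\infty(Y), G, \beta)$ inside $\Pi_{k\to\omega}M_{n_k}$. The unitary side is already supplied by $\theta$ itself; what must be constructed is a $G$-equivariant, trace-preserving $*$-embedding $\pi\colon L^\infty(Y)\to \Pi_{k\to\omega}D_{n_k}$. Once $\pi$ is in hand, the map $\overline\theta\!\left(\sum_g f_g u_g\right):=\sum_g \pi(f_g)\,\theta(g)$ is a $*$-homomorphism on the algebraic crossed product; trace preservation then promotes it to an embedding of the full tracial crossed product $L^\infty(Y)\rtimes_\beta G$.

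\textbf{Reduction to a single projection.} Since $L^\infty(Y)$ is generated as a von Neumann algebra by the projections $p_h=\chi_{\{y:\,y(h)=1\}}$ for $h\in G$, which are independent with $\tau(p_h)=1/2$ and satisfy $\beta_g(p_h)=p_{gh}$, covariance forces $\pi(p_h)=\theta(h)\,q\,\theta(h)^{-1}$ for a single projection $q:=\pi(p_e)$. The only remaining constraint is matching the Bernoulli joint distribution: for every finite $F=\{h_1,\dots,h_n\}\subset G$ and every $\varepsilon\in\{0,1\}^n$,
\[
\tau\!\left(\prod_{i=1}^n\theta(h_i)\,q^{\varepsilon_i}(1-q)^{1-\varepsilon_i}\,\theta(h_i)^{-1}\right)=2^{-n}.
\]
Writing $q=[(1_{A_k})_k]$ with $A_k\subset\{1,\dots,n_k\}$ and $\theta(g)=[(\theta_k(g))_k]$, and using that all factors lie in the abelian algebra $D_{n_k}$, this is equivalent to
\[
\lim_{k\to\omega}\frac{1}{n_k}\left|\left\{x:\theta_k(h_i)^{-1}x\in A_k\iff \varepsilon_i=1 \text{ for all }i\right\}\right|=2^{-n}.
\]

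\textbf{Probabilistic construction of $q$.} For each $k$, choose $A_k$ as a uniformly random subset of $\{1,\dots,n_k\}$ by independent unbiased coin flips. Fix $F$ and $\varepsilon$. Soficity, $\ell_H(\theta(g))=1$ for $g\neq e$, ensures that $\theta_k(h_j^{-1}h_i)$ has $o(n_k)$ fixed points in the ultrafilter limit for $i\neq j$, so for all but $o(n_k)$ values of $x$ the points $\theta_k(h_1)^{-1}x,\dots,\theta_k(h_n)^{-1}x$ are pairwise distinct. At such $x$ the coin flips $1_{A_k}(\theta_k(h_i)^{-1}x)$ are literally i.i.d.\ fair, so the expected density of the set above is $2^{-n}+o_k(1)$. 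A standard second-moment computation (pairs of distinct $x$ share at most $n^2$ coordinates of randomness out of $\sim n_k$) shows the variance is $O(1/n_k)$, so the random density concentrates around its mean with probability tending to $1$.

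\textbf{Assembly and main obstacle.} Enumerate the countable family of all pairs $(F,\varepsilon)$ to be matched, and extract a diagonal sequence (equivalently, invoke $\aleph_1$-saturation of the ultrapower) to obtain a single deterministic $(A_k)_k$ that realises every density identity simultaneously in the $\omega$-limit. Defining $q:=[(1_{A_k})_k]$ and $\pi(p_h):=\theta(h)q\theta(h)^{-1}$, extending multiplicatively to the algebra of cylinder sets and then by $L^2$-continuity to all of $L^\infty(Y)$, yields a $G$-equivariant trace-preserving embedding into $\Pi_{k\to\omega}D_{n_k}$; the covariant pair $(\pi,\theta)$ then assembles into the desired $\overline\theta$. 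The technical heart is this uniform probabilistic selection, which is exactly where the hypothesis $\ell_H(\theta(g))=1$ is used (to rule out orbit collisions) and why a diagonal/saturation argument is needed to upgrade "each joint-moment identity holds almost surely" to "all of them hold for one fixed sequence".
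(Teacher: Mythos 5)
Your proposal is correct and follows essentially the same route as the paper's (outline of the Elek--Lippner) proof: reduce to constructing the single projection $\overline\theta(Q_e^1)$, pick it via a uniformly random diagonal projection, use soficity to make the points $\theta_k(h_i)^{-1}x$ distinct for most $x$, control the variance by a second-moment computation, and finish with a diagonal argument over finite sets of constraints. No substantive differences to report.
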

\begin{proof}
For the reader's convenience, we outline here the main ideas in the original proof, adapted to our language and notation. The $\sigma$-algebra on $Y=\{0,1\}^G=\{f:G\to\{0,1\}\}$ is generated by the cylinder sets:
\[c_{g_1,g_2,\ldots,g_m}^{i_1,i_2,\ldots,i_m}=\{f\in Y:f(g_j)=i_j,\ j=1,\ldots,m\},\]
where $g_1,\ldots,g_m$ are distinct elements of $G$. The measure of such a cylinder is $1/2^m$. We denote by $Q_{g_1,g_2,\ldots,g_m}^{i_1,i_2,\ldots,i_m}\in L^\infty(Y)$ the projection onto this set.

The key observation is that we only need to construct $\overline\theta(Q_e^1)$, as the rest of the embedding is generated by the following relations:
\begin{align}
Q_g^1&=u_gQ_e^1u_g^*\\
Q_g^0&=Id-Q_g^1\\
Q_{g_1,g_2,\ldots,g_m}^{i_1,i_2,\ldots,i_m}&=Q_{g_1}^{i_1}\cdot Q_{g_2}^{i_2}\cdot \ldots\cdot Q_{g_m}^{i_m}.
\end{align}
These relations are written in $L^\infty(Y)\rtimes_\beta G$ and $u_g$ is the unitary corresponding to $g\in G$. All we need is to construct $a\in\Pi_{k\to\omega}D_{n_k}$, such that $Tr\big(\theta(g_1)a\theta(g_1)^*\cdot\theta(g_2)a\theta(g_2)^*\cdot\ldots\cdot\theta(g_m)a\theta(g_m)^*\big)=1/2^m$ for each $m$ and $g_1,\ldots,g_m\in G$.

We use the \emph{second moment method}: we consider the set of all projections $a_k\in\pp(D_{n_k})$, we compute the expected value of these traces, i.e. the average, and show that the variance, i.e. the deviation from the average, is sufficiently small. 

Let us first exemplify in the case of one projection. For now, we fix $n\in\nz$, dropping the $n_k$ index. The cardinality of $\pp(D_{n})$ is $2^{n}$, as each of the $n$ diagonal entries can independently be $0$ or $1$. We identify $a\in\pp(D_n)$ with this function $d_a:\{1,\ldots,n\}\to\{0,1\}$, representing the diagonal entries. Then $Tr(a)=\frac1n\sum_{x\in\{1,\ldots,n\}}d_a(x)$.
Moreover, for each $x$,  $d_a(x)=1$ for exactly half of the matrices $a\in\pp(D_n)$, i.e. $\frac1{2^n}\sum_{a\in\pp( D_{n})}d_a(x)=\frac12$. It follows that:
\[\frac1{2^{n}}\sum_{a\in\pp( D_{n})}Tr(a)=\frac1{2^{n}}\sum_{a\in\pp( D_{n})}\frac1n\sum_{x\in\{1,\ldots,n\}}d_a(x)=\frac1{n2^n}\sum_{x}\sum_{a}d_a(x)=\frac1n\sum_x\frac12=\frac12.\]
This doesn't mean that we can find $a\in\pp(D_n)$ such that $Tr(a)=1/2$ or close to this value. This is why we also compute the variance: 
\[\frac1{2^{n}}\sum_{a\in\pp( D_{n})}\big(Tr(a)-\frac12\big)^2=\frac1{2^{n}}\sum_{a\in\pp( D_{n})}Tr(a)^2-\frac1{2^{n}}\sum_{a\in\pp( D_{n})}Tr(a)+\frac14=\frac1{2^{n}}\sum_{a\in\pp( D_{n})}Tr(a)^2-\frac14.\]
If this value is small enough, we can interfere the existence of many projections with trace close to $1/2$. In order to compute $\sum Tr(a)^2$, we notice that $Tr(a)^2=Tr(a\otimes a)$. The function associated to $a\otimes a$ is $d_{a\otimes a}:\{1,\ldots,n\}\times\{1,\ldots,n\}\to\{0,1\}$, defined as $d_{a\otimes a}(x,y)=d_a(x)\cdot d_a(y)$. Then $Tr(a\otimes a)=\frac1{n^2}\sum_{x,y}d_{a\otimes a}(x,y)$. As before, we fix $(x,y)\in\{1,\ldots,n\}^2$, and estimate $\frac1{2^n}\sum_{a\in\pp(D_n)}d_{a\otimes a}(x,y)$. If $x=y$, then this sum is $\frac12$ as before. If $x\neq y$, then $d_{a\otimes a}(x,y)=1$ iff $d_a(x)=d_a(y)=1$. Only $1/4$ of elements of $\pp(D_n)$ satisfies this condition. All in all:
\[\frac1{2^{n}}\sum_{a\in\pp( D_{n})}Tr(a)^2=\frac1{n^2}\big(n\cdot\frac12+(n^2-n)\frac14\big)=\frac14+\frac1{4n}.\]
We proved that $\frac1{2^{n}}\sum\big(Tr(a)-\frac12\big)^2=\frac1{4n}$. It means that, for any $\lambda>0$, $(Tr(a)-\frac12\big)^2\geqslant\frac\lambda{4n}$ for at most $\frac{2^{n}}\lambda$ elements of $\pp(D_n)$ (in this setting, this is Chebyshev's inequality). As, we can increase $n$ arbitrary large, we can choose a dimension where $Tr(a)$ is close to $1/2$ for any proportion of projections we want. 

The proof of the theorem is not more difficult. In order to construct the required embedding $\overline\theta$, fix $\ve>0$, and $F\subset G$ a finite subset with $m$ its cardinality. We want to find a sufficiently large $n_k$ and $a\in D_{n_k}$, such that not only that $|Tr(a)-1/2|<\ve$, but also $|Tr(p_{1}a^{s_1}p_{1}^*\cdot\ldots\cdot p_{m}a^{s_m}p_{m}^*)-1/2^m|<\ve$, where $p_{1},\ldots,p_m$ is an enumeration of the set $\{\theta_k(g):g\in F\}$ and $s_j\in\{0,1\}$ with $a^1=a$ and $a^0=1-a$. Denote by $N$ the number of these inequalities ($N=2^m$).

We choose $\lambda=N+1$ and show, using the above method, that each of these conditions fails for at most  $2^{n_k}/\lambda$ projections $a\in D_{n_k}$. This implies the existence of a projection that simultaneously satisfies all those inequalities. We identify a permutation matrix $p\in P_{n_k}$ with an element of $Sym(n_k)$, meaning a function $\{1,\ldots,n_k\}\to\{1,\ldots,n_k\}$. In order to greatly simplify the writing we assume that for all entires $x\in\{1,\ldots,n_k\}$ the permutations $p_1,\ldots,p_m$ take different values, i.e. the set $\{p_1(x),\ldots,p_m(x)\}$ has cardinality $m$ for each $x$. This is true for most entries in a sofic representation. The complete proof will separate the set $\{1,\ldots,n_k\}$ into these ``good" points and ``bad" points, the error that the definition of sofic groups allows. With this assumption, we have:
\[\frac1{2^{n_k}}\sum_{a\in\pp( D_{n_k})}Tr(p_{1}a^{s_1}p_{1}^*\cdot\ldots\cdot p_{m}a^{s_m}p_{m}^*)= \frac1{2^m}.\]
As before, this computation is done by fixing an entry $x\in\{1,\ldots,n_k\}$ and count how often $b=p_{1}a^{s_1}p_{1}^*\cdot\ldots\cdot p_{m}a^{s_m}p_{m}^*$ is $1$ on this position. Note that $d_b(x)=1$ iff $d_{p_ja^{s_j}p_j^*}(x)=1$ for each $j=1,\ldots,m$. Also $d_{p_ja^{s_j}p_j^*}(x)=d_{a^{s_j}}(p_j(x))$. In the end $d_b(x)=1$ iff $d_a(p_j(x))=s_j$ for all $j=1,\ldots,m$. Here we use the fact that numbers $\{p_j(x)\}_j$ are distinct, so that indeed $\frac1{2^{n_k}}\sum_ad_{p_{1}a^{s_1}p_{1}^*\cdot\ldots\cdot p_{m}a^{s_m}p_{m}^*}(x)=\frac1{2^m}$. 

The more difficult part is to compute the variance:
\[\frac1{2^{n_k}}\sum_{a\in\pp( D_{n_k})}\big(Tr(p_{1}a^{s_1}p_{1}^*\cdot\ldots\cdot p_{m}a^{s_m}p_{m}^*)-\frac1{2^m}\big)^2=\frac1{2^{n_k}}\sum_{a\in\pp( D_{n_k})}Tr(p_{1}a^{s_1}p_{1}^*\cdot\ldots\cdot p_{m}a^{s_m}p_{m}^*)^2-\frac1{4^m}.\]

Using the same notation $b=p_{1}a^{s_1}p_{1}^*\cdot\ldots\cdot p_{m}a^{s_m}p_{m}^*$, we have that $Tr(b)^2=Tr(b\otimes b)$. Then $d_{b\otimes b}(x,y)=1$ iff $d_a(p_j(x))=d_a(p_j(y))=s_j$ for all $j=1,\ldots,m$. If $\{p_j(x):j=1,\ldots,m\}$ and $\{p_j(y):j=1,\ldots,m\}$ are disjoint sets, then these conditions hold for exactly $2^{n_k-2m}$ projections $a\in\pp(D_{n_k})$. If those two sets intersect, we are not interested in computing the number of good projections. It can be zero, or $2^{n_k-m}$ if $x=y$. Thus, we need to count the number of pairs $(x,y)$ such that $\{p_j(x):j=1,\ldots,m\}$ and $\{p_j(y):j=1,\ldots,m\}$ are disjoint. Assume they are not. Then there is $j_1,j_2\in\{1,\ldots,m\}$ such that $p_{j_1}(x)=p_{j_2}(y)$, or $y=p_{j_2}^{-1}p_{j_1}(x)$. In total we get $m^2$ forbidden values of $y$ for any fixed $x$. Hence, the number of good $(x,y)$ pairs is $n_k(n_k-m^2)$, which is a generalisation of our previous $n(n-1)$ (actually the number of good pairs is slightly less, as we have to exclude also those for which $\{p_j(x)\}_j$ or $\{p_j(y)\}_j$ are not collection of distinct numbers). We can now provide an estimate.
\[\sum_{a\in\pp( D_{n_k})}Tr(p_{1}a^{s_1}p_{1}^*\cdot\ldots\cdot p_{m}a^{s_m}p_{m}^*)^2\leqslant \frac1{n_k^2}\big(n_k(n_k-m^2)\frac1{4^m}+n_km^2\frac1{2^m})=\frac1{4^m}+\frac{c_m}{n_k},\]
where $c_m$ is a constant on $m$. Now we know that $\big(Tr(p_{1}a^{s_1}p_{1}^*\cdot\ldots\cdot p_{m}a^{s_m}p_{m}^*)-\frac1{2^m}\big)^2<\frac{\lambda c_m}{n_k}$ for all but at most $\frac{2^{n_k}}{\lambda}$ elements in $\pp(D_{n_k})$. We choose a sufficiently large $n_k$ such that $\frac{\lambda c_m}{n_k}<\ve^2$ and we are done.
\end{proof}

All this effort just to get rid of an amplification. Before proving the result of this section, we cite the following proposition.

\begin{p}[Proposition 3.3 of \cite{Pa1}]
Let $\theta_1$, $\theta_2$ be two embeddings of $L^\infty(X,\mu)$ in $\Pi_{k\to\omega}D_{n_k}$. Then there exists $p\in\Pi_{k\to\omega}P_{n_k}$ such that $\theta_2=p\theta_1p^*$.
\end{p}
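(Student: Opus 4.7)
The plan is to imitate the structure of the proof of Theorem \ref{all maps}: fix a countable refining sequence $\{\mathcal{Q}_n\}_{n\in\nz}$ of finite Borel partitions of $X$ that generate the Lebesgue $\sigma$-algebra (for instance the dyadic intervals), match the two embeddings on each $\mathcal{Q}_n$ by a finite-level permutation, and then assemble a single $p$ by a diagonal argument, extending to all of $L^\infty(X,\mu)$ via density.

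At each level $n$, the two families $\{\theta_1(\chi_A)\}_{A\in\mathcal{Q}_n}$ and $\{\theta_2(\chi_A)\}_{A\in\mathcal{Q}_n}$ are partitions of unity in $\Pi_{k\to\omega}D_{n_k}$ with matching traces equal to $\mu(A)$, because any unital $*$-embedding of $L^\infty(X,\mu)$ into the ultraproduct is automatically trace preserving. Hence, by Lemma 3.2 of \cite{Pa1} (the same lemma invoked in the proof of Theorem \ref{all maps}), there exists $q_n\in\Pi_{k\to\omega}P_{n_k}$ such that $q_n\,\theta_1(\chi_A)\,q_n^{*}=\theta_2(\chi_A)$ for every $A\in\mathcal{Q}_n$. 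A diagonal argument across $n$ then produces a single $p\in\Pi_{k\to\omega}P_{n_k}$ simultaneously satisfying all of these finite-level equations: picking representatives $q_n=(q_{n,k})_k$, let $S_n\in\omega$ be the set of indices $k$ on which the level-$n$ equations hold with Hamming error less than $1/n$; using the countable incompleteness of $\omega$, one sets $p_k:=q_{n(k),k}$ for a function $n(k)\to\infty$ along $\omega$.

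Finally, both $f\mapsto p\,\theta_1(f)\,p^{*}$ and $\theta_2$ are normal unital $*$-homomorphisms $L^\infty(X,\mu)\to\Pi_{k\to\omega}D_{n_k}$ which agree on the $*$-algebra spanned by $\{\chi_A:A\in\bigcup_n\mathcal{Q}_n\}$, a $\sigma$-strongly dense subset of $L^\infty(X,\mu)$; consequently they agree on the entire algebra, yielding $\theta_2=p\,\theta_1\,p^{*}$. The main obstacle is the diagonal step: one must collapse countably many separate witnesses $q_n$, each valid only modulo $\mu_\omega$-null sets, into a single permutation that defines an element of $\Pi_{k\to\omega}P_{n_k}$ and preserves every equation in the ultraproduct. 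All other steps reduce to the finite-level matching lemma from \cite{Pa1} and a standard normal-extension argument.
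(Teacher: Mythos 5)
The paper does not actually prove this proposition; it only quotes it from \cite{Pa1}. Your route --- a refining sequence of finite partitions, the finite-level matching Lemma 3.2 of \cite{Pa1}, a diagonal argument over a decreasing chain of sets in $\omega$, and a normality/density argument to pass from the generated $*$-algebra to all of $L^\infty(X,\mu)$ --- is exactly the strategy the paper itself uses for the analogous Theorem \ref{all maps}, and it is the right one here.

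There is, however, one genuinely false step: the claim that \emph{any} unital $*$-embedding of $L^\infty(X,\mu)$ into $\Pi_{k\to\omega}D_{n_k}$ is automatically trace preserving. It is not. Take $S(t)=t^2$ on $X=[0,1]$; then $f\mapsto f\circ S$ is a unital normal $*$-automorphism of $L^\infty(X,\mu)$, and composing a trace-preserving embedding $\theta_1$ with it yields an embedding $\theta_2$ with $\tau\big(\theta_2(\chi_{[0,1/2]})\big)=\mu\big([0,2^{-1/2}]\big)\neq 1/2$. Since the trace on $\Pi_{k\to\omega}D_{n_k}$ is invariant under conjugation by elements of $\Pi_{k\to\omega}P_{n_k}$, no $p$ can conjugate $\theta_1$ to this $\theta_2$; so the proposition itself is false unless ``embedding'' is read, as in \cite{Pa1}, to mean a \emph{trace-preserving} (equivalently, Loeb-measure-preserving) embedding. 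Once you assume that hypothesis rather than deriving it, the traces of the two partitions of unity match by hypothesis, Lemma 3.2 of \cite{Pa1} applies at each level, and the rest of your argument --- the diagonal collapse of the witnesses $q_n$ (noting that, since the partitions refine, $q_n$ already handles all levels up to $n$) and the extension by $\sigma$-weak density using normality of trace-preserving maps between finite von Neumann algebras --- goes through.
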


\begin{te}
Let $\theta:G\to\Pi_{k\to\omega}Sym(n_k)$ be a sofic representation. Then there exists $p\in\Pi_{k\to\omega}Sym(n_k)$ such that $p\theta p^*\subset\mathcal{GA}$.
\end{te}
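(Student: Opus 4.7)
The plan is to combine the Elek--Lippner extension theorem (Theorem \ref{Elek-Lippner}) with the conjugation result for diagonal embeddings (Proposition 3.3 of \cite{Pa1}) and then invoke the normaliser characterisation (Theorem \ref{normaliser characterisation}) to recognise the conjugated image as sitting inside $\mathcal{GA}$.

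First, I would apply Theorem \ref{Elek-Lippner} to obtain an embedding $\overline\theta:L^\infty(Y)\rtimes_\beta G\to\Pi_{k\to\omega}M_{n_k}$ extending $\theta$, with $\overline\theta(L^\infty(Y))\subset\Pi_{k\to\omega}D_{n_k}$. Because $\overline\theta$ is a morphism of the crossed product, the image $\overline\theta(L^\infty(Y))$ is normalised by $\theta(G)$; explicitly, the Bernoulli relation $u_g\,(f\circ\beta_{g^{-1}})\,u_g^*=f$ inside $L^\infty(Y)\rtimes_\beta G$ gives, for every $g\in G$,
\[\theta(g)\,\overline\theta(L^\infty(Y))\,\theta(g)^*=\overline\theta(L^\infty(Y)).\]

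Next, the standard non-atomic probability space $Y=\{0,1\}^G$ is measure-isomorphic to $(X,\mu)=([0,1],\text{Leb})$, so $\overline\theta$ restricted to $L^\infty(Y)$ may be viewed as an embedding of $L^\infty(X,\mu)$ into $\Pi_{k\to\omega}D_{n_k}$. The canonical embedding $St^*:L^\infty(X,\mu)\hookrightarrow\Pi_{k\to\omega}D_{n_k}$ is another such embedding. By Proposition 3.3 of \cite{Pa1} there exists $p\in\Pi_{k\to\omega}P_{n_k}$ with
\[p\,\overline\theta(L^\infty(Y))\,p^*=St^*(L^\infty(X,\mu)).\]

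Conjugating by $p$, for each $g\in G$ we compute
\[(p\theta(g)p^*)\,St^*(L^\infty(X,\mu))\,(p\theta(g)p^*)^*=p\,\theta(g)\,\overline\theta(L^\infty(Y))\,\theta(g)^*\,p^*=p\,\overline\theta(L^\infty(Y))\,p^*=St^*(L^\infty(X,\mu)),\]
so $p\theta(g)p^*$ lies in the normaliser of $St^*(L^\infty(X,\mu))$. By Theorem \ref{normaliser characterisation} this precisely means $p\theta(g)p^*\in\mathcal{GA}$, hence $p\theta p^*\subset\mathcal{GA}$ as required.

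The main work is genuinely done by Theorem \ref{Elek-Lippner}; the delicate point in the write-up is only the identification that enables Proposition 3.3 of \cite{Pa1} to apply, namely that $\overline\theta|_{L^\infty(Y)}$ really is an embedding of a standard non-atomic $L^\infty$ space into $\Pi_{k\to\omega}D_{n_k}$ with the correct trace, so that the two embeddings $\overline\theta|_{L^\infty(Y)}$ and $St^*$ are on an equal footing and a permutation conjugator exists. Once that is in place, the rest is a one-line normaliser computation.
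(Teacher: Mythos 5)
Your proposal is correct and follows essentially the same route as the paper's own proof: Theorem \ref{Elek-Lippner} to extend $\theta$ to the Bernoulli crossed product, Proposition 3.3 of \cite{Pa1} to conjugate $\overline\theta(L^\infty(Y))$ onto $St^*(L^\infty(X,\mu))$, and Theorem \ref{normaliser characterisation} to conclude. You merely spell out the normalisation identity $\theta(g)\,\overline\theta(L^\infty(Y))\,\theta(g)^*=\overline\theta(L^\infty(Y))$ and the measure isomorphism $Y\simeq[0,1]$, which the paper leaves implicit.
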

\begin{proof}
By Theorem \ref{Elek-Lippner}, we have an extension $\overline\theta:L^\infty(Y)\rtimes_\beta G \to\Pi_{k\to\omega}M_{n_k}$. By the above proposition there is $p\in\Pi_{k\to\omega}P_{n_k}$ such that $St^{*}(L^\infty(X))=p\overline\theta(L^\infty(Y))p^{-1}$. For any $g\in G$, $p\theta(g) p^*$ is acting on this abelian subalgebra. By Theorem \ref{normaliser characterisation}, $p\theta(g) p^*$ is in $\mathcal{GA}$.
\end{proof}

This theorem shows that, when investigating sofic groups, we can restrict our study from the universal sofic group to the subgroup of generalised automorphisms.

\section{The Coxeter semi-length}

In this section we investigate the connection between generalised maps and the order relation on $X_\omega$. This study leads us to the Coxeter length, that we now define.

\begin{de}
For $p\in Sym(n)$ the \emph{Coxeter length} is defined as:
\[\ell_C(p)=\frac2{n(n-1)}Card\{i<j:p(i)>p(j)\}.\]
Only the identity has Coxeter length equal to zero. If $p(i)=n+1-i$, then $\ell_C(p)=1$, independently of $n$. The factor $\frac2{n(n-1)}$ from the definition plays the role of normalising the length.
\end{de}

\begin{p}
For $p\in Sym(n)$, $\ell_C(p)\leqslant 2\cdot \ell_H(p)$.
\end{p}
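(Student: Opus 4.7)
The plan is to unfold both definitions and reduce the claim to the elementary counting inequality
\[
\#\{i<j: p(i)>p(j)\}\leqslant (n-1)\cdot \#\{i:p(i)\neq i\},
\]
which after multiplying by $\frac{2}{n(n-1)}$ gives exactly $\ell_C(p)\leqslant 2\ell_H(p)$, since $\ell_H(p)=\frac1n\#\{i:p(i)\neq i\}$.

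First I would set $F=\{i:p(i)\neq i\}$, the non-fixed-point set of $p$, and let $I=\{(i,j):i<j,\ p(i)>p(j)\}$ denote the set of inversions. The key observation is that every inversion involves at least one point of $F$: indeed, if both $i$ and $j$ were fixed by $p$, then $p(i)=i<j=p(j)$, contradicting $(i,j)\in I$. Therefore the map that sends an inversion $(i,j)$ to ``its non-fixed endpoint(s)'' yields, upon summing, the bound
\[
|I|\leqslant \sum_{i\in F}\#\{\text{inversions having } i \text{ as one of the two coordinates}\}.
\]

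Next I would bound each summand: for a fixed $i$, the number of pairs in $I$ containing $i$ is at most the number of other indices, namely $n-1$. Combining gives $|I|\leqslant (n-1)|F|$, which is the desired inequality. Dividing by $n(n-1)/2$ yields
\[
\ell_C(p)=\frac{2}{n(n-1)}|I|\leqslant \frac{2|F|}{n}=2\ell_H(p).
\]

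There is no real obstacle here: the argument is purely combinatorial, and the factor of $2$ on the right-hand side is precisely the slack coming from the fact that an inversion with both endpoints in $F$ is counted twice in the sum over $i\in F$, while the estimate $|I|\leqslant (n-1)|F|$ treats it as if it were counted only once.
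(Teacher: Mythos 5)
Your argument is correct and rests on the same key observation as the paper's proof, namely that a pair of fixed points can never form an inversion, so every inversion involves at least one non-fixed point. The only difference is bookkeeping: the paper counts the guaranteed non-inversions exactly, getting $\abs{I}\leqslant\binom{n}{2}-\binom{n-k}{2}$ and then simplifying, while you bound the inversions directly by the union bound $\abs{I}\leqslant (n-1)\,\abs{F}$, which is marginally cruder but still yields $\ell_C(p)\leqslant 2\ell_H(p)$ after normalising.
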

\begin{proof}
Let us assume that $\ell_H(p)=\frac{k}{n}$ with $k\geq 0$. Since $p$ has $n-k$ fixed points, the number of pairs $(i,j)$ such that $i<j$ and $p(i)<p(j)$ is at least ${{n}\choose{n-k}}=\frac{1}{2}(n-k)(n-k-1)$. Therefore: 
\[\ell_C(p)\leq 1-\frac{(n-k)(n-k-1)}{n(n-1)}=\frac{2kn-k(k+1)}{n(n-1)}\leq \frac{2kn-2k}{n(n-1)}=\frac{2k}{n}=2\cdot \ell_H(p).\]
\end{proof}

\begin{cor}
The function $\ell_C:\Pi_{k\to\omega}Sym(n_k)\to [0,1]$, defined as $\ell_C(\Pi_{k\to\omega}p_k)=\lim_{k\to\omega}\ell_C(p_k)$, is a well defined semi-length on the universal sofic group.
\end{cor}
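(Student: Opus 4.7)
The plan is to verify the three semi-length axioms at the level of each finite symmetric group $Sym(n_k)$, and then push them through the ultraproduct. The comparison $\ell_C\leqslant 2\ell_H$ proved in the previous proposition will then guarantee that the ultrafilter limit does not depend on the chosen representative.

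First, I would establish the semi-length axioms for $\ell_C$ on $Sym(n)$. The equality $\ell_C(e)=0$ is immediate. For the symmetry $\ell_C(p^{-1})=\ell_C(p)$, the map $(i,j)\mapsto(p(j),p(i))$ is a bijection between inversions of $p$ and inversions of $p^{-1}$, and the normalising factor $\frac{2}{n(n-1)}$ is the same. The main combinatorial step is subadditivity: for $p,q\in Sym(n)$, I would show
\[Card\{i<j:pq(i)>pq(j)\}\leqslant Card\{i<j:p(i)>p(j)\}+Card\{i<j:q(i)>q(j)\}.\]
For each inversion $(i,j)$ of $pq$, set $(a,b)=(q(i),q(j))$. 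If $a<b$, then $(a,b)$ is an inversion of $p$; otherwise $(i,j)$ itself is an inversion of $q$. These two cases are disjoint, and each assignment is injective (using that $q$ is a permutation in the first case), so the bound holds. Dividing by $\frac{n(n-1)}{2}$ gives $\ell_C(pq)\leqslant \ell_C(p)+\ell_C(q)$.

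Next, I would address well-definedness. Since $\ell_C(p_k)\in[0,1]$, the ultralimit $\lim_{k\to\omega}\ell_C(p_k)$ exists by compactness. Suppose $\Pi_{k\to\omega}p_k=\Pi_{k\to\omega}q_k$, i.e.\ $\lim_{k\to\omega}\ell_H(p_kq_k^{-1})=0$. Combining the finite-level subadditivity and symmetry one obtains
\[|\ell_C(p_k)-\ell_C(q_k)|\leqslant \ell_C(p_kq_k^{-1}),\]
and the previous proposition gives $\ell_C(p_kq_k^{-1})\leqslant 2\ell_H(p_kq_k^{-1})$, which tends to $0$ along $\omega$. Hence $\lim_{k\to\omega}\ell_C(p_k)=\lim_{k\to\omega}\ell_C(q_k)$, so $\ell_C$ is well-defined on $\Pi_{k\to\omega}Sym(n_k)$. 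The identity, symmetry, and subadditivity axioms now pass to the ultralimit in a routine way.

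The only real obstacle is the finite-level subadditivity; everything else is a direct consequence of it, the bound $\ell_C\leqslant 2\ell_H$, and compactness of $[0,1]$. The proof does not yield faithfulness (and indeed one cannot expect it in the ultraproduct, which is why the statement only asserts a semi-length).
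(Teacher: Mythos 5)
Your argument is correct and is precisely the justification the paper leaves implicit: the corollary is stated without proof as an immediate consequence of the preceding bound $\ell_C\leqslant 2\ell_H$, which (together with subadditivity and symmetry of the inversion count on each $Sym(n_k)$) gives well-definedness under change of representative exactly as you describe. Your direct combinatorial verification of $\mathrm{inv}(pq)\leqslant \mathrm{inv}(p)+\mathrm{inv}(q)$ and of the symmetry $\ell_C(p^{-1})=\ell_C(p)$ correctly supplies the standard Coxeter word-length facts the authors take for granted.
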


The following proposition provides a nice characterisation of elements of the universal sofic group that act trivially on $(X,\mu)$.

\begin{p}
Let $p=\Pi_{k\to\omega}p_k\in\Pi_{k\to\omega}Sym(n_k)$. Then $p|_X=Id$ if and only if $\ell_C(p)=0$.
\end{p}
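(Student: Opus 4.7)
The plan is to establish the integral identity
\[
\ell_C(p) = \int_{X_\omega} \mu_\omega\bigl(I_x \Delta p^{-1}(I_{p(x)})\bigr)\, d\mu_\omega(x),
\]
in the spirit of the identity already derived in the proof of Proposition \ref{restricted permutation}, and then read off both implications from it. This reformulates the Coxeter length, which a priori counts inversions at the finite level, as a one-variable average of symmetric-difference measures of initial segments, which is the right form to compare to the criterion for $p|_X = Id$.

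To obtain the identity, I would rewrite $2\cdot\#\{i<j:p_k(i)>p_k(j)\}$ as the number of ordered pairs $(x,y)$ with $x\neq y$ for which $p_k$ reverses the sign of $y-x$. Summing first over $x$, and setting $E_{x,k}=\{y\neq x : \mathrm{sign}(y-x)\neq\mathrm{sign}(p_k(y)-p_k(x))\}$, one gets $\ell_C(p_k)=\frac{1}{n_k(n_k-1)}\sum_x|E_{x,k}|$. A direct case check shows $E_{x,k}=I_x\Delta p_k^{-1}(I_{p_k(x)})$ on the nose (the point $y=x$ lies in neither side). Passing to the ultraproduct, the finite average $\frac{1}{n_k}\sum_x |E_{x,k}|/n_k$ converges along $\omega$ to the Loeb integral of $x\mapsto \mu_\omega(I_x\Delta p^{-1}(I_{p(x)}))$, and the extra factor $n_k/(n_k-1)$ tends to $1$, giving the identity.

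Given the identity, both directions are short. If $p|_X = Id$, then Proposition \ref{restricted permutation} yields $p(I_x)=I_x$ for $\mu_\omega$-a.e.\ $x$, hence $p^{-1}(I_x)=I_x$; combining with $St(p(x))=St(x)$ and Proposition \ref{initial segments} (which gives $I_{p(x)}=I_x$ whenever $St(p(x))=St(x)$), we obtain $p^{-1}(I_{p(x)})=p^{-1}(I_x)=I_x$ a.e., so the integrand vanishes and $\ell_C(p)=0$. Conversely, if $\ell_C(p)=0$, then the integrand $\mu_\omega(I_x\Delta p^{-1}(I_{p(x)}))$ vanishes for $\mu_\omega$-a.e.\ $x$; since $|\mu_\omega(A)-\mu_\omega(B)|\leq \mu_\omega(A\Delta B)$ and $p$ preserves $\mu_\omega$, we get
\[
St(x)=\mu_\omega(I_x)=\mu_\omega\bigl(p^{-1}(I_{p(x)})\bigr)=\mu_\omega(I_{p(x)})=St(p(x))\quad\text{a.e.,}
\]
i.e., $p|_X = Id$.

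The main delicate point is the Loeb-integration step converting the finite average $\frac{1}{n_k}\sum_x|E_{x,k}|/n_k$ into an integral against $\mu_\omega$; this is a standard consequence of the Loeb construction applied to the uniformly bounded internal function $x\mapsto|E_{x,k}|/n_k$, and once set up the remaining calculations are purely set-theoretic manipulations of the initial segments $I_x$.
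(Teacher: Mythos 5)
Your proposal is correct and follows essentially the same route as the paper: the paper also establishes the identity $\ell_C(p)=\int_{X_\omega}\mu_\omega\bigl(I_y\Delta p^{-1}I_{p(y)}\bigr)\,d\mu_\omega(y)$ (phrased via the inversion set $Inv(p)\subset X_\omega^2$ and Fubini on the Loeb product measure, rather than your sum-over-$x$-first bookkeeping, but this is the same computation), and then deduces both implications exactly as you do, invoking Propositions \ref{initial segments} and \ref{restricted permutation} for the forward direction and the measure comparison $\mu_\omega(I_x)=\mu_\omega(I_{p(x)})$ for the converse.
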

\begin{proof}
We define $Inv(p)=\{(x,y)\in X_\omega^2: \,x\leqslant y, \ p(x)>p(y)\}\cup \{(x,y)\in X_\omega^2: \ x>y, \,p(x)\leq p(y)\}$. This set can be described as $Inv(p)=\{(x,y)\in X_\omega^2: \  x\in I_y\Delta p^{-1}I_{p(y)}\}$. By Fubini's theorem, $\mu_\omega\times\mu_\omega(Inv(p))=\int_{X_\omega}\mu_\omega(I_y\Delta p^{-1}I_{p(y)})d\mu_\omega(y)$. Moreover:
\[\mu_\omega\times\mu_\omega(Inv(p))=
\lim_\omega \frac{2Card(\{(x_k,y_k)\in\{1,\ldots,n_k\}^2:\, x_k\leqslant y_k,\ p_k(x_k)>p_k(y_k)\})}{{n_k}^2}=\ell_C(p).\]
It follows that $\ell_C(p)=0$ iff $\int_{y\in X_\omega}\mu_\omega(I_y\Delta p^{-1}I_{p(y)})d\mu_\omega=0$ iff $I_y=p^{-1}I_{p(y)}$ for $\mu_\omega$-almost all $y\in X_\omega$.

If $I_y=p^{-1}I_{p(y)}$ then $\mu_\omega(I_y)=\mu_\omega(I_{p(y)})$ so $St(y)=St(p(y))$ almost everywhere. This is the definition of $p|_X=Id$.

Assume now that $p|_X=Id$. By Proposition \ref{initial segments} $I_y=I_{p(y)}$, and by Propostion \ref{restricted permutation} $I_{p(y)}=p^{-1}I_{p(y)}$. Hence $I_y=p^{-1}I_{p(y)}$ and thus $\ell_C(p)=0$.
\end{proof}

\begin{de}
Denote by $\ell_0=\{p\in\Pi_{k\to\omega}Sym(n_k):\ell_C(p)=0\}$.
\end{de}

In the next section we shall see that the group of generalised automorphisms is an extension of $\ell_0$ by $Aut(X,\mu)$.

By using a result of Diaconis and Graham, we can link the group of $\ell_0$ with the notion of \emph{total displacement}, with no additional effort.

\begin{de}
For $p\in Sym(n)$,  the \emph{normalized total displacement} is defined as: 
\[T(p):=\frac{2}{n(n-1)}\sum^n_{i=1} |p(i)-i|.\]
\end{de}

The name \emph{total displacement} comes from \cite{Kn}; however Diaconis and Graham already showed the relation between Coxeter distance and total displacement.

\begin{p}[Theorem 2 in \cite{Di-Gr}]
For any $p\in Sym(n)$ we have $\ell_C(p)\leq T(p)\leq 2\ell_C(p)$.
\end{p}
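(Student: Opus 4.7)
All three quantities carry the same normalization factor $\frac{2}{n(n-1)}$, so the claim reduces to the unnormalized Diaconis--Graham inequality $I(p)\le D(p)\le 2I(p)$, where $I(p)=\lvert\{i<j:p(i)>p(j)\}\rvert$ counts inversions and $D(p)=\sum_{i=1}^{n}\lvert p(i)-i\rvert$ is the Spearman footrule. My approach hinges on a pointwise identity for the easy direction and a global counting argument for the hard one.

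The starting point is the combinatorial identity $p(i)-i=\operatorname{inv}^+(i)-\operatorname{inv}^-(i)$, where $\operatorname{inv}^+(i)=\lvert\{j>i:p(j)<p(i)\}\rvert$ and $\operatorname{inv}^-(i)=\lvert\{j<i:p(j)>p(i)\}\rvert$. This follows by writing $p(i)-i=\lvert\{k:p(k)<p(i)\}\rvert-\lvert\{k<i\}\rvert$ and splitting both sets according to whether $k<i$ or $k>i$; the common contribution $\lvert\{k<i:p(k)<p(i)\}\rvert$ cancels. Double counting then gives $\sum_i\operatorname{inv}^+(i)=\sum_i\operatorname{inv}^-(i)=I(p)$, and the upper bound $D(p)\le 2I(p)$ is immediate from the termwise triangle inequality $\lvert\operatorname{inv}^+(i)-\operatorname{inv}^-(i)\rvert\le\operatorname{inv}^+(i)+\operatorname{inv}^-(i)$ summed over $i$.

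The lower bound $I(p)\le D(p)$ is the delicate direction and does not hold termwise. The plan is to rewrite $D$ as a sum over cuts, $D(p)=2\sum_{k=1}^{n-1}E_k(p)$ with $E_k(p)=\lvert\{i\le k:p(i)>k\}\rvert$, obtained by exchanging summations in $\sum_i(p(i)-i)^+$. Each inversion $(i,j)$ admits at least one witnessing cut $k$ satisfying $\max(i,p(j))\le k<\min(j,p(i))$, and one would design a charging scheme sending inversions to cuts so that no cut is overcharged beyond $2E_k$, yielding $I(p)\le 2\sum_k E_k=D(p)$. The main obstacle is precisely constructing this charging: the naive induction on $I(p)$ via adjacent transpositions can waste $D$-budget---for example, $(3,2,1)\to(2,3,1)$ reduces $I$ by $1$ yet leaves $D=4$ unchanged---so any proof must exploit a genuinely global balance between inversions and displacement, as in \cite{Di-Gr}.
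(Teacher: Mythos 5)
For context: the paper does not prove this proposition at all --- it is quoted verbatim as Theorem~2 of \cite{Di-Gr} with no proof environment --- so there is no in-paper argument to compare against; your attempt has to stand on its own.

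Your normalization reduction is correct, and your proof of the upper bound $T(p)\le 2\ell_C(p)$ is complete and correct: the identity $p(i)-i=\operatorname{inv}^+(i)-\operatorname{inv}^-(i)$ holds (both sides equal $\lvert\{k>i:p(k)<p(i)\}\rvert-\lvert\{k<i:p(k)>p(i)\}\rvert$ after cancelling $\lvert\{k<i:p(k)<p(i)\}\rvert$), the double count $\sum_i\operatorname{inv}^{\pm}(i)=I(p)$ is right, and the triangle inequality finishes it. The genuine gap is the lower bound $\ell_C(p)\le T(p)$, which you yourself flag as unfinished; worse, the one concrete claim you do make there is false. You assert that every inversion $(i,j)$ admits a cut $k$ with $\max(i,p(j))\le k<\min(j,p(i))$, but this interval is empty whenever $p(i)\le i$ (then $\min(j,p(i))=p(i)\le i\le\max(i,p(j))$). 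Concretely, for $p=(3,4,2,1)$ the pair $(i,j)=(3,4)$ is an inversion with $p(3)=2$, $p(4)=1$, and the required $k$ would have to satisfy $3\le k<2$. So the proposed charging scheme cannot even get started by assigning each inversion to one of its own witnessing cuts, and the delicate half of the Diaconis--Graham inequality remains unproved. (Your identity $D(p)=2\sum_k E_k(p)$ is correct and is a reasonable starting point, but as your own example $(3,2,1)\to(2,3,1)$ shows, neither a termwise nor a naive inductive argument closes the bound; the actual proof in \cite{Di-Gr} proceeds by a different induction, repeatedly swapping a ``crossing pair'' $i<j$ with $p(i)\ge j$ and $p(j)\le i$, which strictly decreases $D$ while removing at least as many inversions.) As it stands, only half of the stated proposition is established.
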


It follows that the normalised total displacement can be defined also for elements of the universal sofic group, as an ultralimit.

\begin{cor}
Let $p=\Pi_{k\to\omega}p_k\in\Pi_{k\to\omega}Sym(n_k)$. Then $ p\in\ell_0\Leftrightarrow \lim_{k\to\omega}T(p_k)=0$.
\end{cor}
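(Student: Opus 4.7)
The plan is to derive this corollary as a direct ultralimit consequence of the Diaconis--Graham double inequality $\ell_C(q) \leq T(q) \leq 2\ell_C(q)$, which the preceding proposition asserts holds at each finite level $q \in \mathrm{Sym}(n)$. The main point is that taking ultralimits along $\omega$ preserves non-strict inequalities between bounded sequences, so the pointwise double inequality promotes to an analogous comparison of the ultralimits.

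More concretely, for each $k$ I would apply the Diaconis--Graham inequality to the permutation $p_k \in \mathrm{Sym}(n_k)$, obtaining
\[
\ell_C(p_k) \;\leq\; T(p_k) \;\leq\; 2\,\ell_C(p_k).
\]
Both sequences $\{\ell_C(p_k)\}_k$ and $\{T(p_k)\}_k$ take values in $[0,1]$, hence admit ultralimits along $\omega$. By monotonicity of the ultralimit on bounded real sequences, these inequalities yield
\[
\lim_{k\to\omega}\ell_C(p_k) \;\leq\; \lim_{k\to\omega}T(p_k) \;\leq\; 2\lim_{k\to\omega}\ell_C(p_k),
\]
where the left-hand quantity is exactly $\ell_C(p)$ by the definition recalled in the corollary preceding the definition of $\ell_0$.

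From this squeeze both implications are immediate. If $p \in \ell_0$, then $\ell_C(p) = 0$, and the upper bound forces $\lim_{k\to\omega} T(p_k) = 0$. Conversely, if $\lim_{k\to\omega} T(p_k) = 0$, then the lower bound forces $\ell_C(p) = 0$, i.e.\ $p \in \ell_0$.

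There is essentially no obstacle here: the work has already been done in the Diaconis--Graham proposition, and the only thing to check is the elementary fact that ultralimits respect the inequalities $\ell_C(p_k) \leq T(p_k) \leq 2\ell_C(p_k)$. The corollary is therefore just a clean packaging of that quoted result in the ultraproduct language set up earlier in the section.
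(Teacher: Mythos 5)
Your proposal is correct and is exactly the argument the paper intends: the corollary is stated without proof as an immediate consequence of the Diaconis--Graham inequality $\ell_C(p)\leq T(p)\leq 2\ell_C(p)$, applied levelwise and passed through the ultralimit, which preserves non-strict inequalities of bounded sequences. The only negligible imprecision is that $T(p_k)$ lies in $[0,2]$ rather than $[0,1]$, but boundedness is all that is needed for the ultralimit to exist.
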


\section{A short exact sequence}

\begin{te}
The following is a short exact sequence: $0\to\ell_0\to \mathcal{GA}\to Aut(X,\mu)\to 0$.
\end{te}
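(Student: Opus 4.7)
The plan is to identify the two arrows explicitly: the map $\ell_0\to\mathcal{GA}$ is the inclusion, and the map $\Psi:\mathcal{GA}\to\mathrm{Aut}(X,\mu)$ sends $p$ to $p|_X$. Exactness then reduces to four checks: $\mathcal{GA}$ is a group, $\Psi$ is a homomorphism, $\Psi$ is surjective, and $\ker\Psi=\ell_0$. Each of these will be a direct application of a result already proved, so the proof is largely a matter of assembly.

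First, I would invoke Theorem \ref{normaliser characterisation} to identify $\mathcal{GA}$ with the normaliser of $St^{*}(L^\infty(X,\mu))$ inside $\Pi_{k\to\omega}P_{n_k}$. Since normalisers of subalgebras are automatically groups under multiplication and inversion, this gives that $\mathcal{GA}$ is a subgroup of the universal sofic group, so the target of $\Psi$ at least makes sense on a group.

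Next, to see that $\Psi$ is a homomorphism, let $p,q\in\mathcal{GA}$ with $p|_X=\varphi$ and $q|_X=\psi$. Using the action $(pq)(x)=p(q(x))$ on $X_\omega$ and Definition \ref{GM}, for $\mu_\omega$-almost every $x\in X_\omega$ we have
\[
St\big((pq)(x)\big)=St\big(p(q(x))\big)=\varphi\big(St(q(x))\big)=\varphi\big(\psi(St(x))\big),
\]
so $(pq)|_X=\varphi\circ\psi$, and in particular the composition is again an automorphism of $(X,\mu)$, confirming $pq\in\mathcal{GA}$ and $\Psi(pq)=\Psi(p)\Psi(q)$.

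For surjectivity I would simply cite Theorem \ref{all maps}: any $\varphi\in\mathrm{Aut}(X,\mu)$ is in particular measure-preserving, so there exists $p\in\Pi_{k\to\omega}\mathrm{Sym}(n_k)$ with $p|_X=\varphi$, and such a $p$ belongs to $\mathcal{GA}$ by definition. Finally, for the kernel I would invoke the proposition characterising $\ell_C(p)=0$ in terms of $p|_X$: $\Psi(p)=\mathrm{Id}$ iff $p|_X=\mathrm{Id}$ iff $\ell_C(p)=0$, i.e.\ iff $p\in\ell_0$. The same characterisation shows $\ell_0\subseteq\mathcal{GA}$ (since $\mathrm{Id}\in\mathrm{Aut}(X,\mu)$), so $\ker\Psi=\ell_0$ as subsets of $\mathcal{GA}$, giving exactness at $\mathcal{GA}$; exactness at $\ell_0$ is immediate from injectivity of the inclusion. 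There is no genuine obstacle in this argument --- every nontrivial ingredient (group structure via normalisers, lifting of automorphisms, the Coxeter characterisation of triviality) has already been established, and the only work is to recognise that they interlock to produce the claimed short exact sequence.
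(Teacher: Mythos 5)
Your proposal is correct and follows essentially the same route as the paper: define $\Psi(p)=p|_X$, get surjectivity from Theorem \ref{all maps}, and identify the kernel with $\ell_0$ via the characterisation $p|_X=\mathrm{Id}\Leftrightarrow\ell_C(p)=0$. You spell out more detail than the paper does (the group structure of $\mathcal{GA}$ via the normaliser, and the verification that $\Psi$ is a homomorphism), but the argument is the same assembly of previously established results.
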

\begin{proof}
Define $\Psi:\mathcal{GA}\to Aut(X,\mu)$ by $\Psi(p)=\vp$, where $p|_X=\vp$. Then $\Psi$ is a morphism and, by Proposition \ref{all maps}, it is surjective. By definition $Ker(\Psi)=\ell_0$. This proves the statement.
\end{proof}

Settling the type of this extension seems challenging. We can at least prove that it is not trivial, i.e. $\mathcal{GA}$ is not obtained as a direct product via the maps contained in the short exact sequence.

\begin{p}
The commutant of $\ell_0$ in the universal sofic group is trivial.
\end{p}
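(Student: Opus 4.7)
The plan is to show that any $q\in\Pi_{k\to\omega}Sym(n_k)$ centralising $\ell_0$ must equal $e$, proceeding in three steps.

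\emph{Step 1 ($q\in\mathcal{GA}$).} Observe first that $\ell_0$ is precisely the centraliser of $St^*(L^\infty(X,\mu))$ inside $\Pi_{k\to\omega}Sym(n_k)$, because $p|_X=Id$ is equivalent to $p(f\circ St)=f\circ St$ for every $f\in L^\infty(X,\mu)$. Assuming $q$ commutes with every $p\in\ell_0$, for any $f\in St^*(L^\infty(X,\mu))$ one has
\[p(qfq^{-1})p^{-1}=(pqp^{-1})(pfp^{-1})(pq^{-1}p^{-1})=qfq^{-1},\]
so $qfq^{-1}$ is $\ell_0$-invariant. I would then establish the sublemma that the $\ell_0$-invariant elements of $L^\infty(X_\omega,\mu_\omega)$ coincide with $St^*(L^\infty(X,\mu))$: the key point is that for each small interval $I\subset[0,1]$, $\ell_0$ contains enough involutions supported on $St^{-1}(I)$ (of Coxeter length $O(\mu(I)/n_k)$) to show that any $\ell_0$-invariant measurable set is a.e.\ a union of $St$-fibres. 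Granted the sublemma, $qfq^{-1}\in St^*(L^\infty(X,\mu))$ for every $f$, so by Theorem \ref{normaliser characterisation} we get $q\in\mathcal{GA}$.

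\emph{Step 2 ($q\in\ell_0$).} Let $\vp:=q|_X\in Aut(X,\mu)$ and assume for contradiction that $\vp\neq Id$. Pick a measurable $A\subset X$ with $B:=A\setminus\vp(A)$ of positive measure; then $B\cap\vp(B)=\emptyset$ since $B\subset X\setminus\vp(A)$ while $\vp(B)\subset\vp(A)$. Following the method of Theorem \ref{all maps}, construct a non-trivial involution $p\in\ell_0$ supported on $St^{-1}(B)$ by pairing adjacent elements of the subset of $\{1,\ldots,n_k\}$ corresponding to $B$; each pair swap contributes one inversion, so $\ell_C(p_k)=O(1/n_k)$ while $\ell_H(p_k)\to\mu(B)>0$. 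Then $qpq^{-1}$ is supported on $q(St^{-1}(B))=St^{-1}(\vp(B))$, which is disjoint from $St^{-1}(B)$. The equality $qpq^{-1}=p$ would therefore force $p$ to have empty support, a contradiction. Hence $\vp=Id$ and $q\in\ell_0$.

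\emph{Step 3 ($q=e$).} At this point $q\in Z(\ell_0)$. Fix a block size $m\geq 3$ and split $\{1,\ldots,n_k\}$ into consecutive blocks $B_i^k$ of size $m$; the ultraproduct of $H_k:=\prod_iSym(B_i^k)$ sits inside $\ell_0$ (its elements have Coxeter length $O(m/n_k)$), while the exact centraliser of $H_k$ in $Sym(n_k)$ is trivial because $Sym(m)$ has trivial centre. To upgrade this to the approximate ultraproduct setting, given $\ell_H(q_k)\geq c>0$ along $\omega$ I would construct $\tau_k\in H_k$ with $\ell_H([q_k,\tau_k])\geq 2c/m$ by a block-by-block analysis: if $q_k(B_i^k)$ is contained in a single block $B_{\pi(i)}^k$ and $q_k|_{B_i^k}$ is non-identity, pick $\tau_k|_{B_i^k}$ non-commuting with $q_k|_{B_i^k}$; if $q_k(B_i^k)$ spills across several blocks, pick a transposition $(x,y)\in Sym(B_i^k)$ with $q_k(x),q_k(y)$ in distinct blocks, which makes $[q_k,\tau_k]$ move at least the two points $q_k^{-1}(x),q_k^{-1}(y)$. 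A positive density of such blocks exists whenever $\ell_H(q_k)\geq c$, so taking the ultraproduct $\tau:=\Pi\tau_k\in\ell_0$ yields $[q,\tau]\neq e$, contradicting $q\in Z(\ell_0)$. The main obstacle is the sublemma in Step 1 (a relative-ergodicity statement for the $\ell_0$-action over $St^*(L^\infty(X,\mu))$), together with ensuring that the block-wise choices in Step 3 combine into a single $\tau\in\ell_0$ with Hamming lower bound uniform in $k$.
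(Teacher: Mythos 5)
Your architecture (reduce to $\mathcal{GA}$, then to $\ell_0$, then to $Z(\ell_0)$) is genuinely different from the paper's argument, which is a single direct computation: the paper conjugates the unknown element against three explicit members of $\ell_0$ (a cyclic shift $s^1$, a parity-preserving shift $s^2$, and an involution $s^3$ pairing selected adjacent points) and deduces outright that almost every point is fixed. The problem with your route is that it leans entirely on the Step 1 sublemma --- that the $\ell_0$-fixed subalgebra of $L^\infty(X_\omega,\mu_\omega)$ is exactly $St^*(L^\infty(X,\mu))$ --- and the mechanism you propose for it does not work. Invariance under involutions that pair \emph{adjacent} elements of $\{1,\dots,n_k\}$ is far too weak: take $E_k\subset\{1,\dots,n_k\}$ to be a union of alternating runs of length $\lfloor\sqrt{n_k}\rfloor$. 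Then $E=\Pi_{k\to\omega}E_k$ satisfies $\mu_\omega(E\,\Delta\,\sigma(E))=0$ for every adjacent-pairing involution $\sigma$ (only the $O(\sqrt{n_k})$ run boundaries are affected), yet $E$ is not $St^*$-measurable, since its conditional density over every fibre is $1/2$. So local swaps supported on $St^{-1}(I)$ cannot certify that an invariant set is a union of $St$-fibres; you need elements of $\ell_0$ that mix at every scale $o(n_k)$ --- for instance the shifts $i\mapsto i+t_k \pmod{n_k}$ with $t_k/n_k\to 0$, followed by an overspill/averaging argument. That is a real lemma requiring proof, and it is precisely the role played by the paper's $s^1$ and $s^2$; in effect your Step 1, done correctly, already contains the paper's whole argument.

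The remaining steps are in better shape but not free of work. Step 2 is essentially sound (the count ``each pair swap contributes one inversion'' is wrong when the paired elements of $B_k$ are not consecutive integers, but the total-displacement bound $T(p_k)\leq 4/(n_k-1)$ rescues the claim $p\in\ell_0$, and the disjointness argument via $q(St^{-1}(B))=St^{-1}(\vp(B))$ is correct). Step 3 still needs the consistency issue you flag to be resolved: when $q_k$ maps a block onto a different block, the prescription ``choose $\tau_k|_{B_i^k}$ non-commuting with $q_k|_{B_i^k}$'' is not meaningful, and a block may be simultaneously the source and the target of constraints; a random choice of $\tau_k\in H_k$ with a second-moment estimate would fix this, but it must be written out. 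In short: Steps 2--3 are repairable bookkeeping, but Step 1 as proposed would fail, and patching it essentially forces you back to the paper's direct construction.
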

\begin{proof}
Let $p=\Pi_{k\to\omega}p_k\in\Pi_{k\to\omega}Sym(n_k)$ be an element commuting with $\ell_0$. Let $s_k^1,s_k^2\in Sym(n_k)$ be defined as follows: $s_k^1(i)=i+1$ with the exception of $s_k^1(n_k)=1$ and $s_k^2(2i)=2i$, $s_k^2(2i+1)=2i+3$, again with the exception of the largest odd number smaller than $n_k$, for which $s_k^2$ is equal to $1$. It is easy to see that $s_1=\Pi_{k\to\omega}s_k^1$ and $s_2=\Pi_{k\to\omega}s_k^2$ are elements of $\ell_0$. It follows that $ps_1=s_1p$ and $ps_2=s_2p$.

Let $A_k$ be the set of points $i\in\{1,\ldots,n_k\}$ such that $p_ks_k^1(i)=s_k^1p_k(i)$ and $p_ks_k^2(i)=s_k^2p_k(i)$. We know that $Card(A_k)/n_k\to_{k\to\omega}1$. Then, for $i\in A_k$, the first condition implies that $p_k(i+1)=p_k(i)+1$ and the second one amounts to $i$ and $p_k(i)$ having the same parity.

Let $B_k$ be the set of non-fixed even points of $p_k$, i.e. $B_k=\{2i:p_k(2i)\neq 2i\}$. Let $C_k$ be a maximal subset of $B_k$ with the property that $p_k(C_k)\cap C_k=\es$. Then $Card(C_k)\geqslant Card(B_k)/3$ (if $x\in B_k\setminus C_k$ cannot be added to $C_k$ it means that either $p_k(x)\in C_k$ or $x\in p_k(C_k)$).

Construct $s_k^3$ as follows: $s_k^3(2i)=2i+1$ and $s_k^3(2i+1)=2i$ for any $i$ such that $2i\in C_k$, and $s_k^3(i)=i$ otherwise. It is easy to see that $s_3=\Pi_{k\to\omega}s_k^3$ is in $\ell_0$, so $ps_3=s_3p$. However, for $2i\in A_k\cap C_k$, we have:
\begin{align*}
p_ks_k^3(2i)=&p_k(2i+1)=p_k(2i)+1\\
s_k^3p_k(2i)=&p_k(2i)\mbox{ because } p_k(2i)\notin C_k\mbox{ and it is even.}
\end{align*}
It follows that $d_H(p_ks_k^3,s_k^3p_k)\geqslant Card(A_k\cap C_k)/n_k$. As $ps_3=s_3p$, we get $Card(A_k\cap C_k)/n_k$ converges in the ultralimit to $0$. As $Card(A_k)/n_k\to_{k\to\omega}1$, it must be that $Card(C_k)/n_k\to_{k\to\omega}0$. Then $Card(B_k)/n_k\to_{k\to\omega}0$, so almost all even points in $\{1,\ldots,n_k\}$ are fixed points for $p_k$. A similar argument can be performed for odd points.

\end{proof}

\begin{cor}
The extension $0\to\ell_0\to \mathcal{GA}\to Aut(X,\mu)\to 0$ is not trivial.
\end{cor}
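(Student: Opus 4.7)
The plan is to deduce the corollary as an immediate contrapositive of the preceding proposition. Triviality of the extension means that $\mathcal{GA}$ splits as an internal direct product $\ell_0 \times H$, where $H \leq \mathcal{GA}$ is the image of a group-theoretic section of $\Psi : \mathcal{GA} \to Aut(X,\mu)$, so that $H \cap \ell_0 = \{e\}$, $\mathcal{GA} = \ell_0 \cdot H$, and crucially the two factors \emph{commute element-wise}.

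Assuming this for contradiction, pick any non-identity $\varphi \in Aut(X,\mu)$ (e.g.\ a measurable involution exchanging the two halves of $[0,1]$). Since $\Psi$ is surjective by Theorem \ref{all maps}, there exists $h \in H$ with $\Psi(h) = \varphi$, and in particular $h \neq e$ in $\Pi_{k\to\omega}Sym(n_k)$, as $h \notin \ker\Psi = \ell_0$. By the direct product structure, $h$ commutes with every element of $\ell_0$, so $h$ is a non-identity element of the commutant of $\ell_0$ inside $\Pi_{k\to\omega}Sym(n_k)$.

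This directly contradicts the previous proposition, which asserts that this commutant is trivial. Hence no such splitting can exist, and the short exact sequence does not split as a direct product.

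The only step requiring any thought is the observation that a direct-product decomposition forces the factor $H$ to lie in the centraliser of $\ell_0$; this is immediate from the definition of an internal direct product. No additional obstacle arises, since the hard work — showing that the commutant of $\ell_0$ is trivial — was already done in the preceding proposition.
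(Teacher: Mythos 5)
Your proposal is correct and is exactly the argument the paper intends: the paper explicitly defines triviality of the extension as $\mathcal{GA}$ being a direct product via the maps in the sequence, and derives the corollary immediately from the proposition that the commutant of $\ell_0$ is trivial, just as you do.
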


\section*{Acknowledgements}

This work was supported by a grant of the Romanian National Authority for Scientific Research and Innovation, CNCS - UEFISCDI, project number PN-II-RU-TE-2014-4-0669.

Parts of this article were inspired by Gabor Elek's mini-course on sofic groups at the Erwin Schr\"{o}dinger Institute for Mathematics and Physics under the Measured Group Theory program in 2016.

\begin{bibdiv}
\begin{biblist}

\bib{Ar-Pa}{article}{
   author={Arzhantseva, Goulnara},
   author={P\u{a}unescu, Liviu},
   title={Almost commuting permutations are near commuting permutations},
   journal={J. Funct. Anal.},
   volume={269},
   date={2015},
   number={3},
   pages={745-757},
   %issn={0025-5831},
   %review={\MR{2178069}},
   %doi={10.1007/s00208-005-0640-8},
}

\bib{Ca-Lu}{book}{
   author={Capraro, V.},
   author={Lupini, M.},
   title={Introduction to Sofic and Hyperlinear Groups and Connes' Embedding Conjecture},
   series={Lecture Notes in Mathematics},
   Volume={2136}
   publisher={Springer International Publishing},
   date={2015},
   %review={\MR{0120127 (22 \#10884)}},
}

\bib{Cu}{article}{
   author={Cutland, N.J.},
   title={Nonstandard measure theory and its applications},
   journal={Bull. London Math. Soc.},
   volume={15},
   date={1983},
   number={6},
   pages={529-589},
}

\bib{Di-Gr}{article}{
   author={Diaconis, Persi},
   author={Graham, R. L.},
   title={Spearman's footrule as a measure of disarray},
   journal={J. Roy. Statist. Soc. Ser. B},
   volume={39},
   date={1977},
   number={2},
   pages={262--268},
   issn={0035-9246},
   review={\MR{0652736}},
}

\bib{El-Li}{article}{
   author={Elek, G.},
   author={Lippner, G.},
   title={Sofic equivalence relations},
   journal={J. Funct. Anal.},
   volume={258},
   date={2010},
   number={5},
   pages={1692--1708},
}

\bib{El-Sz1}{article}{
   author={Elek, G.},
   author={Szab{\'o}, E.},
   title={Hyperlinearity, essentially free actions and $L^2$-invariants.
   The sofic property},
   journal={Math. Ann.},
   volume={332},
   date={2005},
   number={2},
   pages={421--441},
   %issn={0025-5831},
   %review={\MR{2178069 (2007i:43002)}},
   %doi={10.1007/s00208-005-0640-8},
}

\bib{El-Sze}{arx}{
   author={{Elek}, G. },
    author={{Szegedy}, B.}
    title ={Limits of hypergaphs, removal and regularity lemmas. A non-standard approach},
   note = {arXiv:0705.2179},
      %year = {2010},
  }

\bib{Kn}{book}{
   author={Knuth, Donald E.},
   title={The art of computer programming. Vol. 3},
   note={Sorting and searching;
   Second edition [of MR0445948]},
   publisher={Addison-Wesley, Reading, MA},
   date={1998},
   pages={xiv+780},
   isbn={0-201-89685-0},
   review={\MR{3077154}},
}

\bib{Lo}{article}{
   author={Loeb, P. A.},
   title={Conversion from nonstandard to standard measure spaces and
   applications in probability theory},
   journal={Trans. Amer. Math. Soc.},
   volume={211},
   date={1975},
   pages={113--122},
   %issn={0002-9947},
   %review={\MR{0390154 (52 \#10980)}},
}

\bib{Oz}{article}{
   author={Ozawa, N.}
   title={Hyperlinearity, sofic groups and applications to group theory},
   date={2009}
pages={http://people.math.jussieu.fr/ $\sim$pisier/taka.talk.pdf}
}

\bib{Pa1}{article}{
   author={P{\u{a}}unescu, L.},
   title={On Sofic Actions and Equivalence Relations},
   journal={J. Funct. Anal.},
   volume={261},
   date={2011},
   number={9},
   pages={2461--2485},
  }

\bib{Pe}{article}{
   author={Pestov, V.},
   title={Hyperlinear and sofic groups: a brief guide},
   journal={Bull. Symbolic Logic},
   volume={14},
   date={2008},
   number={4},
   pages={449--480},
   %issn={0143-3857},
   %review={\MR{3227158}},
   %doi={10.1017/etds.2012.193},
}

\end{biblist}
\end{bibdiv}

\end{document}